\newcommand\x{{\bf x}}
\newcommand\z{{\bf z}}
\newcommand\bfa{{\bf a}}
\newcommand\bfb{{\bf b}}
\newcommand\bfe{{\bf e}}
\newcommand\bff{{\bf f}}
\newcommand\bfv{{\bf v}}
\newcommand\zero{{\bf 0}}
\newcommand\cc{{\mathbb C}}
\newcommand\nn{{\mathbb N}}
\newtheorem{theorem}{Theorem}[section]
\newtheorem{lemma}[theorem]{Lemma}
\newtheorem{proposition}[theorem]{Proposition}
\theoremstyle{definition}
\newtheorem{definition}[theorem]{Definition}
\begin{document}

\title{Polyhedral Methods for Space Curves Exploiting Symmetry 
Applied to the Cyclic $n$-roots Problem\thanks{This
material is based upon work supported by the National Science Foundation
under Grant No.\ 0713018 and 
Grant No.\ 1115777.} }

\author{Danko Adrovic and Jan Verschelde\\
Department of Mathematics, Statistics, and Computer Science \\
University of Illinois at Chicago \\
851 South Morgan (M/C 249) \\
Chicago, IL 60607-7045, USA \\
  \texttt{adrovic@math.uic.edu, jan@math.uic.edu}\\
  \texttt{\url{www.math.uic.edu/~adrovic}, \url{www.math.uic.edu/~jan}}}

\date{2 June 2013}

\maketitle

\begin{abstract}
We present a polyhedral algorithm to manipulate positive dimensional
solution sets. Using facet normals to Newton polytopes as pretropisms,
we focus on the first two terms of a Puiseux series expansion. 
The leading powers of the series are computed via the tropical
prevariety.
This polyhedral algorithm is well suited for exploitation of symmetry,
when it arises in systems of polynomials. Initial form systems with pretropisms in the same group orbit are solved only once, allowing for a systematic
filtration of redundant data.
Computations with cddlib, Gfan, PHCpack, and Sage
are illustrated on cyclic $n$-roots polynomial systems.

{\bf Keywords.}  Algebraic set, Backelin's Lemma, cyclic $n$-roots, 
initial form, Newton polytope, polyhedral method, polynomial system,
Puiseux series, symmetry, tropism, tropical prevariety.
\end{abstract}

\section{Introduction}

We consider a polynomial system $\bff(\x) = \zero$,
$\x = (x_0,x_1,\ldots,x_{n-1})$, $\bff = (f_1,f_2,\ldots,f_N)$,
$f_i \in \cc[\x]$, $i=1,2,\ldots,N$.  Although in many applications
the coefficients of the polynomials are rational numbers, we allow
the input system to have approximate complex numbers as coefficients.
For $N = n$ (as many equations as unknowns),
we expect in general to find only isolated solutions. 
In this paper we focus on cases $N \geq n$ where the coefficients are 
so special that $\bff(\x) = \zero$ has an algebraic set as a solution.

Our approach is based on the following observation:
if the solution set of~$\bff(\x) = \zero$ has a space curve,
then this space curve extends from~$\cc^* = \cc \setminus \{ 0 \}$
to infinity.  In particular, the space curve intersects hyperplanes
at infinity at isolated points.  We start our series development of
the space curve at these isolated points.
Computing series developments for solutions of polynomial systems
is a hybrid symbolic-numeric method, appropriate for inputs which
consist of approximate numbers (the coefficients) and exact data 
(the exponents).

In this paper we will make various significant assumptions.
First we assume that the algebraic sets we consider are reduced,
that is: free of multiplicities.  Moreover, an algebraic set 
of dimension~$d$ is
in general position with respect to the first $d$ coordinate planes.
For example, we assume that a space curve is not contained
in a plane perpendicular to the first coordinate axis.
Thirdly, we assume the algebraic set of dimension~$d$ to intersect 
the first $d$ coordinate planes at regular solutions.  

Our approach consists of two stages.
The computation of the candidates for 
the leading powers of the Puiseux series is followed by
the computation of the leading coefficients and the second
term of the Puiseux series, if the leading term of the series 
does not already entirely satisfy the system.  
Following our assumptions, the second term of the Puiseux series 
indicates the existence of a space curve. 
If the system is invariant to permutation of the variables, then it
suffices to compute only the generators of the solution orbits.
We then develop the Puiseux series only at the generators.
Although our approach is directed at general algebraic sets,
our approach of exploiting symmetry applies also to the computation
of all isolated solutions.
Our main example is one family of polynomial systems,
the cyclic $n$-roots system. 

\noindent {\bf Related Work.}
Our approach is inspired by the constructive proof of the 
fundamental theorem of tropical algebraic geometry in~\cite{JMM08}
(an alternative proof is in~\cite{Pay09}) and related to finiteness
proofs in celestial mechanics~\cite{HM06}, \cite{JH11}.
The initial form systems allow the elimination of variables with
the application of coordinate transformations, an approach
presented in~\cite{HL12} and related to the application of
the Smith normal form in~\cite{GW12}.
The complexity of polyhedral homotopies is studied
in~\cite{JMSW09} and generalized to affine solutions in~\cite{HJS13}.
Generalizations of the Newton-Puiseux theorem~\cite{Pui50},
\cite{Wal50}, can be found in \cite{AIL11}, \cite{BR03}, \cite{Mau80},
\cite{McD02}, \cite{Ron13}, and~\cite{Sev13}.
A symbolic-numeric computation of Puiseux series is described
in~\cite{Pot08}, \cite{PR08}, and~\cite{PR12}.
Algebraic approaches to exploit symmetry
are~\cite{Col97}, \cite{FR09}, \cite{Gat00}, and~\cite{Ste13}.
The cyclic $n$-roots problem is a benchmark for polynomial system solvers,
see e.g: \cite{BF94}, \cite{Col97}, \cite{DKK03}, \cite{Emi94}, \cite{EC95}, 
\cite{Fau99}, \cite{FR09}, \cite{LLT08}, \cite{Ste13},
and relevant to operator algebras~\cite{BH07},
\cite{Haa07}, \cite{Szo11}.
Our results on cyclic 12-roots correspond to~\cite{Sab11}.

\noindent {\bf Our Contributions.}
This paper is a thorough revision of the unpublished preprint~\cite{AV11b},
originating in the dissertation of the first author~\cite{Adr12},
which extended~\cite{AV11} from the plane to space curves.
In~\cite{AV12} we gave a tropical version of Backelin's Lemma
in case $n = m^2$, in this paper we generalize to the case~$n = \ell m^2$.
Our approach improves homotopies to find all isolated solutions.
Exploiting symmetry we compute only the generating
cyclic $n$-roots, more efficiently than the
symmetric polyhedral homotopies of~\cite{VG95}.

\section{Initial Forms, Cyclic $n$-roots, and Backelin's Lemma}

In this section we introduce our approach on the cyclic 4-roots problem.
For this problem we can compute an explicit representation for the 
solution curves.
This explicit representation as monomials in the independent parameters
for positive dimensional solution sets generalizes into the tropical
version of Backelin's Lemma.

\subsection{Newton Polytopes, Initial Forms, and Tropisms}

In this section we first define Newton polytopes, initial forms,
pretropisms, and tropisms.  The sparse structure of a polynomial
system is captured by the sets of exponents and their convex hulls.

\begin{definition} {\rm 
Formally we denote a polynomial $f \in \cc[\x]$ as
\begin{equation}
   f(\x) = \sum_{\bfa \in A} c_\bfa \x^\bfa, \quad c_\bfa \in \cc^*,
   \quad \x^\bfa = x_0^{a_0} x_1^{a_1} \cdots x_{n-1}^{a_{n-1}},
\end{equation}
and we call the set $A$ of exponents {\em the support of~$f$}.
The convex hull of~$A$ is {\em the Newton polytope of~$f$}.
The tuple of supports 
${\bf A} = (A_1,A_2,\ldots,A_N)$ span the Newton polytopes
${\bf P} = (P_1,P_2,\ldots,P_N)$ of the polynomials
$\bff = (f_1,f_2,\ldots,f_N)$ of the system~$\bff(\x) = \zero$. }
\end{definition}

The development of a series starts at a solution of an initial form 
of the system~$\bff(\x) = \zero$,
with supports that span faces of the Newton polytopes of~$\bff$.

\begin{definition} {\rm Let $\bfv \not= \zero$, denote 
$\langle \bfa , \bfv \rangle = a_0 v_0 + a_1 v_1 + \cdots + a_{n-1} v_{n-1}$,
and let $f$ be a polynomial supported on~$A$.
Then, {\em the initial form of $f$
in the direction of~$\bfv$} is

\begin{equation}
{\rm in}_\bfv(f)
  = \sum_{\begin{array}{c}
           \bfa \in {\rm in}_\bfv(A)
        \end{array}} c_\bfa \x^\bfa,
  \quad \mbox{where} \quad
  {\rm in}_\bfv(A)
   = \{ \ \bfa \in A  \ | \ \langle \bfa , \bfv \rangle
    = \min_{\bfb \in A} \langle \bfb , \bfv \rangle  \ \}.
\end{equation}
The initial form of a system $\bff(\x) = \zero$ with polynomials 
in $\bff = (f_1,f_2,\ldots,f_N)$ 
in the direction of~$\bfv$ is denoted by 
${\rm in}_\bfv(\bff) = ({\rm in}_\bfv(f_1),{\rm in}_\bfv(f_2), 
\ldots, {\rm in}_\bfv(f_N))$.
If the number of monomials with nonzero coefficient in each
${\rm in}_\bfv(f_k)$, for all $k=1,2,\ldots,N$, is at least two,
then $\bfv$ is a {\em pretropism}.  }
\end{definition}
The notation ${\rm in}_\bfv(f)$ follows~\cite{Stu96}, where
$\bfv$ represents a weight vector to order monomials.
The polynomial ${\rm in}_\bfv(f)$ is homogeneous with respect to~$\bfv$.
Therefore, in solutions of ${\rm in}_\bfv(f)(\x) = \zero$
we can set $x_0$ to the free parameter~$t$.
In~\cite{Bru00} and~\cite{Kaz99}, initial form systems are called
truncated systems. 

Faces of Newton polytopes $P$ spanned by two points are edges
and all vectors~$\bfv$ that lead to the same ${\rm in}_\bfv(P)$
(the convex hull of ${\rm in}_\bfv(A)$) define a polyhedral cone
(see e.g.~\cite{Zie95} for an introduction to polytopes).

\begin{definition} {\rm
Given a tuple of Newton polytopes $\bf P$ of a system $\bff(\x) = \zero$,
{\em the tropical prevariety} of~$\bff$ is the common refinement of the
normal cones to the edges of the Newton polytopes in~$\bf P$.  }
\end{definition}

Our definition of a tropical prevariety is based on the algorithmic
characterization in~\cite[Algorithm~2]{BJSST07},
originating in~\cite{RST05}.  Consider for example the special
case of two polytopes $P_1$ and $P_2$ and take the intersection 
of two cones, normal to two edges of the two polytopes.  
If the intersection is not empty, then the intersection contains 
a vector~$\bfv$ that defines a tuple of two edges
$({\rm in}_\bfv(P_1), {\rm in}_\bfv(P_2))$.

\begin{definition} {\rm 
For space curves, the special role of $x_0$ is reflected
in the normal form of the Puiseux series:
\begin{equation}
   \left\{
      \begin{array}{l}
         x_0 = t^{v_0} \\
         x_i = t^{v_i} ( y_i + z_i t^{w_i}(1+O(t)), \quad i=1,2,\ldots,n-1,
      \end{array}
   \right.
\end{equation}
where the leading powers $\bfv = (v_0,v_1,\ldots,v_{n-1})$
define a {\em tropism}.  
}
\end{definition}
In the definition above, it is important to observe that the tropism~$\bfv$
defines as well the initial form system ${\rm in}_\bfv(\bff)(\x) = \zero$
that has as solution the initial coefficients of the Puiseux series.

Every tropism is a pretropism, but not every pretropism is a tropism, 
because pretropisms depend only on the Newton polytopes.
For a $d$-dimensional algebraic set,
a $d$-dimensional polyhedral cone of tropisms
defines the exponents of Puiseux series
depending on $d$ free parameters.

\subsection{The Cyclic $n$-roots Problem}

For $n=3$, the cyclic $n$-roots system originates naturally from
the elementary symmetric functions in the roots of a cubic polynomial.
For $n=4$, the system is

\begin{equation}
\bff(\x) = \left\{
  \begin{array}{c}
     x_0 + x_1 + x_2 + x_3 = 0 \\
     x_0 x_1 + x_1 x_2 + x_2 x_3 + x_3 x_0 = 0 \\
     x_0 x_1 x_2 + x_1 x_2 x_3 + x_2 x_3 x_0 + x_3 x_0 x_1 = 0 \\
     x_0 x_1 x_2 x_3 - 1 = 0. \\
  \end{array}
\right.
\end{equation}
The permutation group which leaves the equations invariant is generated by
$(x_0,x_1,x_2,x_3) \rightarrow (x_1,x_2,x_3,x_0)$ and
$(x_0,x_1,x_2,x_3) \rightarrow (x_3,x_2,x_1,x_0)$.
In addition, the system is equi-invariant with respect to the action
$(x_0,x_1,x_2,x_3) \rightarrow (x_0^{-1},x_1^{-1},x_2^{-1},x_3^{-1})$.

With $\bfv = (+1,-1,+1,-1)$, there is
a unimodular coordinate transformation~$M$, denoted by $\x = \z^M$:

\begin{equation}
{\rm in}_\bfv(\bff)(\x) = \left\{
  \begin{array}{c}
     x_1 + x_3 = 0 \\
     x_0 x_1 + x_1 x_2 + x_2 x_3 + x_3 x_0 = 0 \\
     x_1 x_2 x_3 + x_3 x_0 x_1 = 0 \\
     x_0 x_1 x_2 x_3 - 1 = 0 \\
  \end{array}
\right.
\quad
\quad
  \x = \z^M:
\left\{
   \begin{array}{l}
      x_0 = z_0^{+1} \\
      x_1 = z_0^{-1} z_1 \\
      x_2 = z_0^{+1} z_2 \\
      x_3 = z_0^{-1} z_3 \\
   \end{array}
\right.
\end{equation}
with
\begin{equation}
  M =
  \left[
     \begin{array}{cccc}
        +1 & -1 & +1 & -1 \\
         0 &  1 &  0 &  0 \\
         0 &  0 &  1 &  0 \\
         0 &  0 &  0 &  1 \\
     \end{array}
  \right].
\end{equation}

The system ${\rm in}_\bfv(\bff)(\z) = \zero$ has two solutions.
These two solutions are the leading coefficients in the
Puiseux series.  In this case, the leading term of the series
vanishes entirely at the system so we write two solution curves as
$\left( t, -t^{-1}, -t, t^{-1} \right)$ and
$\left( t, t^{-1}, -t, -t^{-1} \right)$.
To compute the degree of the two solution curves, we take a random
hyperplane in~$\cc^4$: $c_0 x_0 + c_1 x_1 + c_2 x_2 + c_3 x_3 + c_5 = 0$,
$c_i \in \cc^*$.  Then the number of points on the curve and on the
random hyperplane equals the degree of the curve.
Substituting the representations we obtained for the curves into
the random hyperplanes gives a quadratic polynomial in~$t$ (after
clearing the denominator~$t^{-1})$, so there are two quadric curves
of cyclic 4-roots.

\subsection{A Tropical Version of Backelin's Lemma}

In~\cite{AV12}, we gave an explicit representation for the solution sets
of cyclic $n$-roots, in case $n = m^2$, for any natural number~$m \geq 2$.
Below we state Backelin's Lemma~\cite{Bac89},
in its tropical form.

\begin{lemma}[Tropical Version of Backelin's Lemma] 
For $n = m^2 \ell$, where $\ell \in \nn \setminus \{ 0 \}$ and
$\ell$ is no multiple of~$k^2$, for $k \geq 2$,
there is an $(m-1)$-dimensional set
of cyclic $n$-roots, represented exactly as
\begin{equation} \label{gensol}
   \begin{array}{rcl}
       x_{km+0} & = & u^{k} t_{0} \\
       x_{km+1} & = & u^{k} t_{0} t_{1} \\
       x_{km+2} & = & u^{k} t_{0} t_{1} t_{2} \\
                & \vdots & \\
       x_{km+m-2} & = & u^{k} t_{0} t_{1} t_{2} \cdots t_{m-2} \\
       x_{km+m-1} & = & \gamma u^{k} t_{0}^{-m+1} t_{1}^{-m+2} 
                             \cdots t_{m-3}^{-2} t_{m-2}^{-1}
   \end{array}
\end{equation}
for $k=0,1,2,\ldots, m-1$, free parameters $t_0,t_1,\ldots,t_{m-2}$,
constants $u = e^{\frac{i 2 \pi}{m\ell}}$, 
$\gamma = e^{\frac{i\pi \beta}{m \ell}}$, 
with $\beta = (\alpha \mod 2)$, and 
$\alpha = m(m\ell-1)$.
\end{lemma}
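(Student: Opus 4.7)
The plan is to verify directly that the parametrization~(\ref{gensol}) (extended naturally so that $k$ ranges over $\{0,1,\ldots,m\ell-1\}$, covering all $n=m^2\ell$ coordinates) satisfies every cyclic $n$-roots equation. Two structural facts drive the argument. First, inside each block of $m$ consecutive coordinates the exponents of every $t_j$ accumulated across the first $m-1$ factors are exactly canceled by the single negative exponent in the last coordinate, collapsing the block product to $x_{km}x_{km+1}\cdots x_{km+m-1}=\gamma u^{km}$. Second, incrementing the block index by one multiplies every coordinate in that block by $u$, yielding the shift relation $x_{i+m}=u\,x_i$ for every $i$ modulo $n$ (using $u^{m\ell}=1$).

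For the norm equation $x_0x_1\cdots x_{n-1}=1$, I multiply the $m\ell$ block products and collect to get $\prod_{i=0}^{n-1}x_i=\gamma^{m\ell}\,u^{m\ell\cdot\alpha/2}$. A short case split on the parity of $\alpha$ shows the stated choice $\gamma=e^{i\pi\beta/(m\ell)}$ with $\beta=\alpha\bmod 2$ forces this product to equal $1$: when $\alpha$ is even both factors are already $1$, while when $\alpha$ is odd (which forces $m\ell$ even) the $u$-power evaluates to $-1$ and is canceled by $\gamma^{m\ell}=e^{i\pi}=-1$. For a power-sum equation $f_j=\sum_{i=0}^{n-1}P_i$ with $P_i=\prod_{k=0}^{j-1}x_{(i+k)\bmod n}$ and $1\le j\le n-1$, the shift relation gives $P_{i+m}=u^jP_i$ at once, so grouping indices by residue mod $m$ yields $f_j=\sum_{r=0}^{m-1}P_r\sum_{s=0}^{m\ell-1}(u^j)^s$, and the inner geometric sum vanishes whenever $u^j\neq 1$, i.e.\ whenever $j$ is not a multiple of $m\ell$.

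The main obstacle is the residual resonant cases $j=qm\ell$ with $q\in\{1,\ldots,m-1\}$, where the $u^j$ factor is trivial and the simple grouping argument fails. For these I invoke a second shift: iterating the basic shift $\ell$ times gives $x_{i+m\ell}=u^\ell\,x_i$, whence $P_{r+1}/P_r=x_{r+j}/x_r=u^{q\ell}$, and therefore $\sum_{r=0}^{m-1}P_r=P_0\sum_{r=0}^{m-1}(u^{q\ell})^r$. Since $u^\ell=e^{2\pi i/m}$ is a primitive $m$th root of unity and $1\le q\le m-1$, the outer geometric sum also vanishes, completing the verification. The $(m-1)$-dimensional family emerges because $t_0,\ldots,t_{m-2}$ enter the parametrization independently; the squarefree hypothesis on $\ell$ is not used to verify that~(\ref{gensol}) gives solutions, but rather to guarantee that $m$ cannot be enlarged, so the stated dimension is maximal among constructions of this form.
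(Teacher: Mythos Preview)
Your argument is correct and, in several places, cleaner than the paper's. The overall structure is the same---reduce everything to the shift relation $x_{i+m}=u\,x_i$ and then verify the last equation by a parity analysis---but the two proofs differ in how much they actually prove on the page.

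For the first $n-1$ equations the paper does not give an argument at all: after the change of variables $x_{km+j}=u^{k}y_{j}$ it simply cites \cite[Lemma~1.1]{Fau01} (``by plain substitution as in the proof of\ldots'') and moves on. Your self-contained treatment via $P_{i+m}=u^{j}P_i$ and the geometric sum $\sum_{s=0}^{m\ell-1}(u^{j})^{s}$ replaces that citation, and your handling of the resonant values $j=qm\ell$ by the secondary telescoping $P_{r+1}/P_r=x_{r+j}/x_r=u^{q\ell}$ and a second geometric sum over a primitive $m$th root of unity is exactly what is needed to close that gap without appealing to Faug\`ere. This is a genuine improvement in self-containment.

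For the norm equation the two proofs agree in substance: both compute the total product as $\gamma^{m\ell}u^{m\ell\alpha/2}$ and do a parity case split. The paper splits on the parities of $m$ and $\ell$ separately (four cases, three of which collapse to $\gamma=1$) and then manipulates $e^{i\pi}$ explicitly; your split directly on $\alpha\bmod 2$ is shorter and matches the definition of $\beta$ more naturally. You also correctly flag that the stated range $k=0,\ldots,m-1$ must be read as $k=0,\ldots,m\ell-1$ to cover all $n=m^2\ell$ coordinates, and that the squarefree hypothesis on $\ell$ plays no role in the verification itself.
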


\begin{proof}
By performing the change of variables 
$y_{0}=t_{0}$, $y_{1}=t_{0}t_{1}$, $y_{2}=t_{0}t_{1}t_{2}$, 
$\ldots$, $y_{m-2}=t_{0}t_{1}t_{2} \cdots t_{m-2}$,
$y_{m-1} = \gamma t_{0}^{-m+1} t_{1}^{-m+2} \cdots t_{m-3}^{-2} t_{m-2}^{-1}$,
the solution (\ref{gensol}) can be rewritten as

\begin{equation} \label{gensol-rewritten}
  x_{km+j} = u^{k}y_{j}, \quad j = 0,1,\ldots,m-1.
\end{equation} 
The solution (\ref{gensol-rewritten}) satisfies 
the cyclic $n$-roots system by plain substitution 
as in the proof of \cite[Lemma~1.1]{Fau01},
whenever the last equation $x_{0}x_{1}x_{2}\cdots x_{n-1}-1=0$ 
of the cyclic $n$-roots problem can also be satisfied. 

We next show that we can always satisfy the equation 
$x_{0}x_{1}x_{2}\cdots x_{n-1}-1=0$ with our solution. 
First, we perform an additional change of coordinates to separate 
the $\gamma$ coefficient.  
We let $y_{0}=Y_{0}$, $y_{1}=Y_{1}$, $\ldots$, 
$y_{m-2}=Y_{m-2}$, $y_{m-1}=\gamma_{}Y_{m-1}$.
Then on substitution of (\ref{gensol-rewritten}) 
into $x_{0}x_{1}x_{2}\cdots x_{n-1}-1=0$, we get

\begin{equation} \label{last-eq-reduction}
   \begin{array}{rcl}
     (\gamma^{m\ell}~\underbrace{u^{0} u^{0} \cdots u^{0}}_{m} 
\underbrace{u^{1} u^{1} \cdots  u^{1}}_{m} 
\cdots
\underbrace{u^{m\ell-1} u^{m\ell-1} \cdots u^{m\ell-1}}_{m} 
~Y_{0}^{m\ell}Y_{1}^{m\ell}Y_{2}^{m\ell} 
\cdots Y_{m-2}^{m\ell}Y_{m-1}^{m\ell})-1 & = & 0 \\
\\
\vspace{0.2in}
      (\gamma^{m\ell}~u^{m(0+1+2+\dots+(m\ell-1))}~
Y_{0}^{m\ell}Y_{1}^{m\ell}Y_{2}^{m\ell} 
\cdots Y_{m-2}^{m\ell}Y_{m-1}^{m\ell})-1 & = & 0 \\
\vspace{0.1in}
   (\gamma~u^{\frac{m(m\ell-1)}{2}}~Y_{0}Y_{1}Y_{2} 
\cdots Y_{m-2}Y_{m-1})^{m\ell}-1 & = & 0. \\
   \end{array}
\end{equation}
The last equation in (\ref{last-eq-reduction}) has now the same form 
as in \cite[Lemma~1.1]{Fau01}. 
We are done if we can satisfy it. 
We next show that it can always be satisfied with our solution. 

Since all the tropisms in the cone add up to zero, 
the product $(Y_{0}Y_{1}Y_{2} \cdots Y_{m-2}Y_{m-1})$, 
which consists of free parameter combinations, equals to 1. 
Since $(Y_{0}Y_{1}Y_{2} \cdots Y_{m-2}Y_{m-1})$ = 1, we are left with 

\begin{equation} \label{gamma-u}
   \begin{array}{rcl}
     (\gamma~u^{\frac{m(m\ell-1)}{2}})^{m\ell}-1=0.
   \end{array}
\end{equation} 
We distinguish two cases:
\begin{enumerate}
\item $\gamma=1$, implied by
     ($m$ is even, $\ell$ is odd) or ($m$ is odd, $\ell$ is odd)
      or ($m$ is even, $\ell$ is even). 

To show that (\ref{gamma-u}) is satisfied,
we rewrite (\ref{gamma-u}):

\begin{equation} \label{gamma-u-rewritten_case1}
     (u^{\frac{m(m\ell-1)}{2}})^{m\ell}-1 =  0 
\quad \Leftrightarrow  \quad (u^{\frac{m^2\ell(m\ell-1)}{2}})-1 = 0 
\quad \Leftrightarrow  \quad ((u^{m\ell})^{\frac{m(m\ell-1)}{2}})-1 =  0,
\end{equation} 
which is satisfied by $u = e^{\frac{i 2 \pi}{m\ell}}$
and $m (m \ell-1)$ being even.

\item  $\gamma \not=1$, implied by ($m$ is odd, $\ell$ is even).

To show that our solution satisfies (\ref{gamma-u}),
we rewrite (\ref{gamma-u}):

\begin{equation} \label{gamma-u-rewritten_case2_I}
     (\gamma~u^{\frac{m(m\ell-1)}{2}})^{m\ell}-1 = 0
\quad \Leftrightarrow \quad
     (\gamma~u^{\frac{m^2\ell}{2}} ~u^{\frac{-m}{2}})^{m\ell}-1 = 0
\quad \Leftrightarrow \quad
     (\gamma~  (u^{m\ell})^{\frac{m}{2}}~    u^{\frac{-m}{2}}    )^{m\ell}-1 
     =  0.
\end{equation} 

Since $u = e^{\frac{i 2 \pi}{m\ell}}$, $u^{m\ell}=1$, 
we can simplify (\ref{gamma-u-rewritten_case2_I}) further

\begin{equation} \label{gamma-u-rewritten_case2_II}
   \begin{array}{rcl}
     (\gamma~  u^{\frac{-m}{2}})^{m\ell}-1 & = & 0\\
     (e^{\frac{i\pi}{m\ell}}~ 
     (e^{\frac{i 2 \pi}{m\ell}})^{\frac{-m}{2}})^{m\ell}-1 & = & 0 \\
     (e^\frac{i\pi}{m\ell}~  (e^{\frac{-i \pi}{\ell}}))^{m\ell}-1 & = & 0 \\
     (e^{i\pi}~  e^{-i \pi m})-1 & = & 0 \\
     (e^{(1-m)i\pi})-1 & = & 0. \\
   \end{array}
\end{equation} 

Since $m$ is odd, we can write $m=2j+1$, for some~$j$.
The last equation of (\ref{gamma-u-rewritten_case2_II}) has the form 

\begin{equation} \label{gamma-u-rewritten_case2_III}
     (e^{(1-m)i\pi})-1 = 0 \quad \Leftrightarrow \quad
     (e^{(1-(2j+1))i\pi})-1 =  0 \quad \Leftrightarrow \quad
     (e^{(-2j)i\pi})-1 = 0.
\end{equation}
Since $(e^{(-2j)i\pi})$ = 1, for any $j$,
the equation $(e^{(-2j)i\pi})-1=0$ is satisfied,
implying (\ref{gamma-u}).
\end{enumerate}
\end{proof}

Backelin's Lemma comes to aid when applying a homotopy to find all
isolated cyclic $n$-roots as follows.  We must decide at the
end of a solution path whether we have reached an isolated solution or a
positive dimension solution set.  This problem is especially difficult
in the presence of isolated singular solutions 
(such as 4-fold isolated cyclic 9-roots~\cite{LVZ06}).
With the form of the solution set as in Backelin's Lemma, 
we solve a triangular binomial system in the parameters $t$ 
and with as $x$ values the solution found at the end of a path.
If we find values for the parameters for an end point,
then this solution lies on the solution set.

\section{Exploiting Symmetry}

We illustrate the exploitation of permutation symmetry on 
the cyclic 5-roots system.  Adjusting polyhedral homotopies
to exploit the permutation symmetry for this system was presented
in~\cite{VG95}.  

\subsection{The Cyclic 5-roots Problem}

The mixed volume for the cyclic 5-roots system is 70,
which equals the exact number of roots.
The first four equations of 
the cyclic 5-roots system $C_5(\x) = \zero$,
define solution curves:

\begin{equation} \label{c5sys4eq}
   \bff(\x) = 
   \left\{
      \begin{array}{l}
         x_{0} + x_{1} + x_{2} + x_{3} + x_{4} = 0 \\
         x_{0} x_{1} + x_{0} x_{4} + x_{1} x_{2} 
         + x_{2} x_{3} + x_{3} x_{4} = 0 \\ 
         x_{0} x_{1} x_{2} + x_{0} x_{1} x_{4} + x_{0} x_{3} x_{4} 
         + x_{1} x_{2} x_{3} + x_{2} x_{3} x_{4} = 0 \\
         x_{0} x_{1} x_{2} x_{3} + x_{0} x_{1} x_{2} x_{4}
         + x_{0} x_{1} x_{3} x_{4}
         + x_{0} x_{2} x_{3} x_{4} + x_{1} x_{2} x_{3} x_{4} = 0. \\
     \end{array}
   \right.
\end{equation}
where $\bfv = (1,1,1,1,1)$.  
As the first four equations of $C_5$ are homogeneous,
the first four equations of $C_5$ coincide with the first four
equations of ${\rm in}_\bfv(C_5)(\x) = \zero$.
Because these four equations are homogeneous, we have lines of solutions.
After computing representations for the solution lines,
we find the solutions to the original cyclic 5-roots problem
intersecting the solution lines with the hypersurface defined
by the last equation.  In this intersection, the exploitation
of the symmetry is straightforward.

The unimodular matrix with $\bfv = (1,1,1,1,1)$ 
and its corresponding coordinate transformation are
\begin{equation} \label{c5relxz}
   M =
     \begin{bmatrix}
      1 & 1 & 1 & 1 & 1\\
      0 & 1 & 0 & 0 & 0\\
      0 & 0 & 1 & 0 & 0\\
      0 & 0 & 0 & 1 & 0\\
      0 & 0 & 0 & 0 & 1\\
    \end{bmatrix}
   \quad
   \x = \z^M :
   \left\{
      \begin{array}{l}
         x_{0} = z_{0} \\
         x_{1} = z_{0} z_{1} \\
         x_{2} = z_{0} z_{2} \\
         x_{3} = z_{0} z_{3} \\
         x_{4} = z_{0} z_{4}. \\
      \end{array}
   \right.
\end{equation}
Applying $\x = \z^M$ to the initial form system (\ref{c5sys4eq}) gives
\begin{equation} \label{c5infz}
   {\rm in}_\bfv(\bff)(\x = \z^M) = 
   \left\{
      \begin{array}{l}
         z_{1} + z_{2} + z_{3} + z_{4} + 1 = 0 \\
         z_{1}z_{2} + z_{2}z_{3} + z_{3}z_{4} + z_{1} + z_{4} = 0 \\
         z_{1}z_{2}z_{3} + z_{2}z_{3}z_{4} + z_{1}z_{2} 
          + z_{1}z_{4} + z_{3}z_{4} = 0 \\
         z_{1}z_{2}z_{3}z_{4} + z_{1}z_{2}z_{3} + z_{1}z_{2}z_{4}
          + z_{1}z_{3}z_{4} + z_{2}z_{3}z_{4} = 0. \\
     \end{array}
   \right.
\end{equation}

The system (\ref{c5infz}) has 14 isolated solutions of the 
form $z_{1} = c_{1}$,~ $z_{2} = c_{2}$,~ $z_{3} = c_{3}$,~ $z_{4} = c_{4}$. 
If we let $z_0 = t$, in the original coordinates we have
\begin{equation} \label{c5sollines}
x_{0} = t,~
x_{1} = t c_{1},~
x_{2} = t c_{2},~
x_{3} = t c_{3},~
x_{4} = t c_{4}
\end{equation}
as representations for the 14 solution lines.

Substituting (\ref{c5sollines}) into the omitted 
equation $x_{0}x_{1}x_{2}x_{3}x_{4} - 1 = 0$, 
yields a univariate polynomial in $t$ of the form $kt^5-1 = 0$, 
where $k$ is a constant.
Among the 14 solutions, 10 are of the form $t^5 - 1$. 
They account for $10 \times 5 = 50$ solutions. There are 
two  solutions of the form $(-122.99186938124345) t^{5} - 1$, 
accounting for $2 \times 5 = 10$ solutions and an additional 
two solutions are of the form $(-0.0081306187557833118)t^{5} - 1$ 
accounting for $2 \times 5 = 10$ remaining solutions. 
The total number of solutions is 70, as indicated by the mixed volume 
computation.
Existence of additional symmetry, which can be exploited, 
can be seen in the relationship between the coefficients 
of the quintic polynomial, 
i.e. $\frac{1}{(-122.99186938124345)} \approx -0.0081306187557833118$. 

\subsection{A General Approach}

That the first $n-1$ equations of cyclic $n$-roots system give explicit
solution lines is exceptional.  For general polynomial systems
we can use the leading term of the Puiseux series 
to compute witness sets~\cite{SVW05}
for the space curves defined by the first $n-1$ equations.  
Then via the diagonal homotopy~\cite{SVW04}
we can intersect the space curves with the rest of the system.
While the direct exploitation of symmetry with witness sets is not possible,
with the Puiseux series we can pick out the generating space curves.

\section{Computing Pretropisms}

Following from the second theorem of Bernshte\v{\i}n~\cite{Ber75},
the Newton polytopes may be in general position and no normals
to at least one edge of every Newton polytope exists.  In that case,
there does not exist a positive dimensional solution set either.
We look for $\bfv$ so that ${\rm in}_\bfv(\bff)(\x) = \zero$
has solutions in~$(\cc^*)^n$ and therefore we look for pretropisms.
In this section we describe two approaches to compute pretropisms.
The first approach applies cddlib~\cite{FP96} on the Cayley embedding.
Algorithms to compute tropical varieties are described in~\cite{BJSST07}
and implemented in Gfan~\cite{Jen08}.  The second approach is the
application of {\tt tropical\_intersection} of Gfan.

\subsection{Using the Cayley Embedding}

The Cayley trick formulates a resultant as a discriminant
as in~\cite[Proposition 1.7, page 274]{GKZ94}.  
We follow the geometric description of~\cite{Stu93},
see also~\cite[\S9.2]{DRS10}.
The {\em Cayley embedding $E_{\bf A}$ 
of ${\bf A} = (A_1,A_2,\ldots,A_N)$} is
\begin{equation}
  E_{\bf A} = \left( A_1 \times \{ \zero \} \right)
   \cup \left( A_2 \times \{ \bfe_1 \} \right) \cup \cdots
   \cup \left( A_N \times \{ \bfe_{N-1} \} \right)
\end{equation}
where $\bfe_k$ is the $k$th $(N-1)$-dimensional unit vector.
Consider the convex hull of the Cayley embedding,
the so-called Cayley polytope, denoted by~${\rm conv}(E_{\bf A})$.  
If ${\rm dim}(E_{\bf A}) = k < 2n-1$,
then a facet of ${\rm conv}(E_{\bf A})$ is a face of dimension $k-1$.

\begin{proposition} Let $E_{\bf A}$ be the Cayley embedding of
the supports ${\bf A}$ of the system~$\bff(\x) = \zero$.
The normals of those facets of ${\rm conv}(E_{\bf A})$ 
that are spanned by at least two points of each support in~$\bf A$ 
form the tropical prevariety of~$\bff$.
\end{proposition}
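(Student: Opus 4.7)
The plan is to invoke the Cayley trick, which identifies faces of ${\rm conv}(E_{\bf A})$ with tuples of faces of the Newton polytopes $P_1, \ldots, P_N$. The key computation is that, for any $\bfv \in \mathbb{R}^n$, setting $m_k = \min_{\bfa \in A_k} \langle \bfa, \bfv \rangle$ and ${\bf w} = (w_1, \ldots, w_{N-1})$ with $w_{k-1} = m_1 - m_k$ (using the convention $\bfe_0 = \zero$ and $w_0 = 0$), the minimising face of ${\rm conv}(E_{\bf A})$ in the direction $(\bfv, {\bf w})$ is
\[
{\rm in}_{(\bfv, {\bf w})}(E_{\bf A}) \;=\; \bigcup_{k=1}^{N} {\rm in}_\bfv(A_k) \times \{ \bfe_{k-1} \}.
\]
This is essentially the unique choice of offsets, up to global shift, for which every copy $A_k \times \{ \bfe_{k-1} \}$ contributes to the minimising face; any other $\bfw$ drops at least one of the supports.

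With this in hand, the two directions of the equivalence fall out. Suppose first that $F$ is a facet of ${\rm conv}(E_{\bf A})$ spanned by at least two points of each $A_k$, with inner normal $(\bfv, {\bf w})$. Since the last $N - 1$ coordinates are constant on each slab $A_k \times \{ \bfe_{k-1} \}$, the two or more points of $A_k$ appearing in $F$ must all attain the minimum of $\langle \cdot, \bfv \rangle$ on $A_k$. Hence ${\rm in}_\bfv(A_k)$ contains at least two points for every~$k$, so $\bfv$ is a pretropism and lies in the support of the tropical prevariety. For the reverse direction, given a pretropism $\bfv$ I build $(\bfv, {\bf w})$ as above and consider the Cayley face $F = {\rm in}_{(\bfv, {\bf w})}(E_{\bf A})$. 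Its dimension equals
\[
\dim(F) \;=\; (N - 1) + \dim\!\left( \sum_{k=1}^{N} {\rm in}_\bfv(A_k) \right),
\]
because the projection of $F$ onto the last $N - 1$ coordinates lands in the simplex ${\rm conv}(\bfe_0, \ldots, \bfe_{N-1})$ and the fibre over any relatively interior point has the same dimension as the Minkowski sum of the ${\rm in}_\bfv(A_k)$.

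Each ${\rm in}_\bfv(A_k)$ sits inside the affine hyperplane $\{ \langle \cdot, \bfv \rangle = m_k \}$ of $\mathbb{R}^n$, so the Minkowski sum has dimension at most $n - 1$. For $\bfv$ in the relative interior of a top-dimensional cone of the tropical prevariety this bound is attained, giving $\dim(F) = n + N - 2$, which is the facet dimension of the $(n + N - 1)$-dimensional Cayley polytope. Thus $\bfv$ is recovered as the first $n$ components of the inner normal to the facet $F$, and lower-dimensional cones of the prevariety appear as intersections of such facet-normal directions along their boundaries. The main obstacle is this last dimension count: one must verify that the Minkowski sum $\sum_k {\rm in}_\bfv(A_k)$ really does fill the hyperplane perpendicular to $\bfv$ at a generic pretropism, which is the substance of the Cayley trick and is precisely what reduces computing the tropical prevariety to a facet enumeration on ${\rm conv}(E_{\bf A})$, as carried out by cddlib in the algorithm that follows.
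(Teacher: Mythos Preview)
Your argument takes a more explicit route than the paper's. The paper passes through the Minkowski sum $\Sigma_{\bf A} = A_1 + \cdots + A_N$: it notes that facets of $\Sigma_{\bf A}$ spanned by at least two points of each support have as normals the ray generators of the prevariety, and then invokes the cell correspondence between subdivisions of ${\rm conv}(E_{\bf A})$ and of $\Sigma_{\bf A}$ (cited from~\cite{DRS10}) to transfer this statement to facets of the Cayley polytope. You instead build the face $F = {\rm in}_{(\bfv,\bfw)}(E_{\bf A})$ directly from a direction $\bfv$ and a matching offset~$\bfw$, and reason about its dimension. This is more self-contained and makes the mechanism of the Cayley trick visible rather than black-boxing it.

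The forward direction is clean. In the reverse direction there is a genuine slip. Your dimension formula $\dim F = (N-1) + \dim\sum_k {\rm in}_\bfv(A_k)$ is correct, but it shows that facets of the Cayley polytope correspond to \emph{rays} of the common refinement of the normal fans, not to top-dimensional cones of the prevariety. If $\bfv$ lies in the relative interior of a cone of dimension $d$ in that refinement, then $\dim\sum_k {\rm in}_\bfv(A_k) = n - d$, so $F$ has codimension $d$ in ${\rm conv}(E_{\bf A})$; you obtain a facet only when $d = 1$. The prevariety can certainly contain cones of dimension $>1$ (Table~\ref{table-cones} already lists two-, three- and four-dimensional cones for cyclic 8-roots), and for $\bfv$ generic in such a cone your $F$ is \emph{not} a facet. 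What you need is that every ray of the prevariety fan arises as a qualifying facet normal; the higher-dimensional cones are then spanned by those rays, and this is the sense in which the facet normals ``form'' the prevariety. Your closing remark about lower-dimensional cones appearing as intersections of facet-normal directions has the inclusion reversed: facet normals are the one-dimensional cones, and the higher-dimensional cones of the prevariety are generated by them, not cut out from them.
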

\begin{proof}
Denote the Minkowski sum of the supports in $\bf A$ as
$\Sigma_{\bf A} = A_1 + A_2 + \cdots + A_N$.
Facets of~$\Sigma_{\bf A}$ spanned by at least two points of each support
define the generators of the cones of the tropical prevariety.
The relation between $E_{\bf A}$ and $\Sigma_{\bf A}$
is stated explicitly in~\cite[Observation~9.2.2]{DRS10}.
In particular, cells in a polyhedral subdivision of~$E_{\bf A}$
are in one-to-one correspondence with cells in a polyhedral subdivision
of the Minkowski sum $\Sigma_{\bf A}$.
The correspondence with cells in a polyhedral subdivision implies
that facet normals of~$\Sigma_{\bf A}$ occur as facet normals
of~${\rm conv}(E_{\bf A})$.
Thus the set of all facets of~${\rm conv}(E_{\bf A})$ gives
the tropical prevariety of~$\bff$.
\end{proof}

Note that $\Sigma_{\bf A}$ can be computed as the Newton polytope
of the product of all polynomials in~$\bff$.
As a practical matter, applying the Cayley embedding is better than
just plainly computing the convex hull of the Minkowski sum because
the Cayley embedding maintains the sparsity of the input,
at the expense of increasing the dimension.
Running {\tt cddlib} \cite{FP96} to compute the H-representation of the
Cayley polytope of the cyclic 8-roots problem yields 94 pretropisms.
With symmetry we have 11 generators, displayed in Table~\ref{table-cones}.
\begin{table}[hbt]
\begin{center}
\begin{tabular}{rcc|cccc}
\multicolumn{3}{c|}{generating pretropisms and initial forms}
& \multicolumn{4}{c}{~} \\ 
&                       & \#solutions of 
& \multicolumn{4}{c}{~higher dimensional cones of pretropisms} \\ 

& pretropism \textbf{v} & ${\rm in}_{\textbf{v}}(C_{8})(\textbf{z})$ &
1D & 2D & 3D & 4D \\ \hline
1. & $(-3, 1, 1, 1, -3,  1, 1, 1)$      & ~94 &
\{1\} & \{1, 3\} & \{1, 6, 11\} & \{1, 2, 3, 11\}  \\
2. & $(-1, -1, -1, 3, -1, -1, -1, 3)$   & 115 &
\{2\} & \{1, 6\} & \{1, 10, 11\} &   \\
3. & $(-1, -1, 1, 1, -1, -1, 1, 1)$     & 112 &
\{3\} & \{1, 10\} & \{2, 8, 11\} &   \\
4. & $(-1, 0, 0, 0, 1, -1, 1, 0)$       & ~30 &
\{4\} & \{1, 11\} &  &   \\
5. & $(-1, 0, 0, 0, 1, 0, -1, 1)$       & ~23 &
\{5\} & \{2, 3\} &  &   \\
6. & $(-1, 0, 0, 1, -1, 1, 0, 0)$       & ~32 &
\{6\} & \{2, 8\} &  &   \\
7. & $(-1, 0, 0, 1, 0, -1, 1, 0)$       & ~40 &
\{7\} & \{3, 7\} &  &   \\
8. & $(-1, 0, 0, 1, 0, 0, -1, 1)$       & ~16 &
\{8\} & \{2, 11\} &  &   \\
9. & $(-1, 0, 1, -1, 1, -1, 1, 0)$      & ~39 &
\{9\} & \{6, 11\} &  &   \\
10. & $(-1, 0, 1, 0, -1, 1, -1, 1)$     & ~23 &
\{10\} & \{8, 11\} &  &   \\
11. & $(-1, 1, -1, 1, -1, 1, -1, 1)$    & 509 &
\{11\} & \{10, 11\} &  &   \\
\end{tabular}
\caption{Eleven pretropism generators of the cyclic 8-root problem,
the number of solutions of the corresponding initial form systems,
and the multidimensional cones they generate, as computed by Gfan.}
\label{table-cones}
\end{center}
\end{table}

For the cyclic 9-roots problem, the computation of the facets of 
the Cayley polytope yield 276 pretropisms, with 17 generators:
$(-2$, $1$, $1$, $-2$, $1$, $1$, $-2$, $1$, $1)$,
$(-1$, $-1$, $2$, $-1$, $-1$, $2$, $-1$, $-1$, $2)$, 
$(-1$, $0$, $0$, $0$, $0$, $1$, $-1$, $1$, $0)$,
$(-1$, $0$, $0$, $0$, $0$, $1$, $0$, $-1$, $1)$, 
$(-1$, $0$, $0$, $0$, $1$, $-1$, $0$, $1$, $0)$,
$(-1$, $0$, $0$, $0$, $1$, $-1$, $1$, $0$, $0)$, 
$(-1$, $0$, $0$, $0$, $1$, $0$, $-1$, $0$, $1)$,
$(-1$, $0$, $0$, $0$, $1$, $0$, $-1$, $1$, $0)$,
$(-1$, $0$, $0$, $0$, $1$, $0$, $0$, $-1$, $1)$,
$(-1$, $0$, $0$, $1$, $-1$, $0$, $1$, $-1$, $1)$,
$(-1$, $0$, $0$, $1$, $-1$, $0$, $1$, $0$, $0)$,
$(-1$, $0$, $0$, $1$, $-1$, $1$, $-1$, $0$, $1)$,
$(-1$, $0$, $0$, $1$, $-1$, $1$, $-1$, $1$, $0)$,
$(-1$, $0$, $0$, $1$, $-1$, $1$, $0$, $-1$, $1)$,
$(-1$, $0$, $0$, $1$, $0$, $-1$, $1$, $-1$, $1)$,
$(-1$, $0$, $0$, $1$, $0$, $0$, $-1$, $0$, $1)$,
and $(-1$, $0$, $1$, $-1$, $1$, $-1$, $0$, $1$, $0)$.
To get the structure of the two dimensional cones, a second run
of the Cayley embedding is needed on the smaller initial form
systems defined by the pretropisms.

The computations for $n=8$ and $n=9$ finished in less than a second
on one core of a 3.07Ghz Linux computer with 4Gb RAM.  For the cyclic
12-roots problem, {\tt cddlib} needed about a week to compute the
907,923 facets normals of the Cayley polytope.
Although effective, the Cayley embedding becomes too inefficient
for larger problems.

\subsection{Using {\tt tropical\_intersection} of Gfan}

The solution set of the cyclic 8-roots polynomial system consists of 
space curves. 
Therefore, all tropisms cones were generated by a single tropism. 
The computation of the tropical prevariety however, did not lead only 
to single pretropisms but also to cones of pretropisms. 
The cyclic 8-roots cones of pretropisms 
and their dimension are listed in Table~\ref{table-cones}.   
Since the one dimensional rays of pretropisms yielded initial form 
systems with isolated solutions and since all higher dimensional
cones are spanned by those one dimensional rays, we can conclude 
that there are no higher dimensional algebraic sets, 
as any two dimensional surface degenerates to a curve 
if we consider only one tropism.

For the computation of the tropical prevariety, 
the Sage 5.7/Gfan function {\tt tropical\_intersection()} ran 
(with default settings without exploitation of symmetry)
on an AMD Phenom II X4 820 processor with 6 GB of RAM, 
running GNU/Linux, see Table~\ref{table-gfan-time}.
As the dimension $n$ increases so does the running time,
but the relative cost factors are bounded by~$n$.

\begin{table}[hbt]
\begin{center}
\begin{tabular}{r|r|r|r}
$n$~ &  seconds~~ & hms format~~~ & ~factor \\ \hline
 8~  &     16.37~ &            16 s~ & 1.0    \\
 9~  &     79.36~ &        1 m 19 s~ & 4.8    \\
10~  &    503.53~ &        8 m 23 s~ & 6.3    \\
11~  &   3898.49~ &   1 h ~4 m 58 s~ & 7.7    \\
12~  & ~37490.93~ & ~10 h 24 m 50 s~ & 9.6    \\
\end{tabular}
\caption{Time to compute the tropical prevarieties for cyclic $n$-roots
with Sage 5.7/Gfan and the relative cost factors: 
for $n=12$, it takes 9.6 times longer than for $n=11$. }
\label{table-gfan-time}
\end{center}
\end{table}

\section{The Second Term of a Puiseux Series}

In exceptional cases like the cyclic 4-roots problem 
where the first term of the series gives an exact solution 
or when we encounter solution lines like with the first
four equations of cyclic 5-roots, we do not have to look
for a second term of a series.
In general, a pretropism~$\bfv$ becomes a tropism 
if there is a Puiseux series with leading powers equal to~$\bfv$.
The leading coefficients of the series is a solution in~$\cc^*$ 
of the initial form system~${\rm in}_\bfv(\bff)(\x) = \zero$.
We solve the initial form systems with PHCpack~\cite{Ver99}
(its blackbox solver incorporates MixedVol~\cite{GLW05}).
For the computations of the series we use Sage~\cite{Sage}.

\subsection{Computing the Second Term}

In our approach, the calculation of the second term in the Puiseux
series is critical to decide whether a solution of an initial
form system corresponds to an isolated solution at infinity of
the original system, or whether it constitutes the beginning of
a space curve.  For sparse systems, we may not assume that the
second term of the series is linear in~$t$.
Trying consecutive powers of~$t$ will be wasteful for high degree
second terms of particular systems.  In this section we explain
our algorithm to compute the second term in the Puiseux series.

A unimodular coordinate transformation $\x = \z^M$ with $M$
having as first row the vector~$\bfv$ turns the initial form
system ${\rm in}_\bfv(\bff)(\x) = \zero$ into
${\rm in}_{{\bf e}_1}(\bff)(\z) = \zero$ 
where ${\bf e}_1 = (1,0,\ldots,0)$ equals the first standard
basis vector.  When $\bfv$ has negative components,
solutions of ${\rm in}_\bfv(\bff)(\x) = \zero$ that are at infinity
(in the ordinary sense of having components equal to~$\infty$)
are turned into solutions in~$(\cc^*)^n$ of
${\rm in}_{{\bf e}_1}(\bff)(\z) = \zero$.

The following proposition states the existence of the exponent of
the second term in the series.  After the proof of the proposition
we describe how to compute this second term.

\begin{proposition}
Let $\bfv$ denote the pretropism and $\x=\z^{M}$ denote the unimodular 
coordinate transformation, generated by~$\bfv$. 
Let ${\rm in}_\bfv(\bff)(\x=\z^{M})$ denote the transformed initial form 
system with regular isolated solutions, forming the isolated solutions 
at infinity of the transformed polynomial system $\bff(\x=\z^{M})$. 
If the substitution of the regular isolated solutions at infinity 
into the transformed polynomial system $\bff(\x=\z^{M})$ does not 
satisfy the system entirely, then the constant terms of $\bff(\x=\z^{M})$
have disappeared, leaving at least one monomial $c_{\ell}t^{w_{\ell}}$ 
for some $f_\ell$ in $\bff(\x=\z^{M})$ with minimal value $w_{\ell}$. 
The minimal exponent $w_{\ell}$ is the candidate for the exponent 
of the second term in the Puiseux series.
\end{proposition}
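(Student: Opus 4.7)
The plan is to trace carefully what happens to each polynomial $f_k$ under the unimodular substitution $\x = \z^M$, and then to substitute the regular isolated solution of the initial form system into the transformed full system. Because $\bfv$ is the first row of $M$, the monomial $c_\bfa \x^\bfa$ transforms to $c_\bfa z_0^{\langle \bfa, \bfv \rangle}$ times a monomial in $z_1, \ldots, z_{n-1}$, so the $z_0$-exponent of each transformed monomial is precisely $\langle \bfa, \bfv \rangle$. Setting $m_k = \min_{\bfa \in A_k} \langle \bfa, \bfv \rangle$ and factoring $z_0^{m_k}$ out of $f_k(\z^M)$, we obtain a Laurent polynomial $g_k(\z)$ whose $z_0^0$-coefficient is exactly the transformed initial form ${\rm in}_\bfv(f_k)(\x = \z^M)$, and whose remaining terms carry strictly positive powers of $z_0 = t$.

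Next I would substitute the regular isolated solution $(y_1, \ldots, y_{n-1})$ of ${\rm in}_\bfv(\bff)(\x = \z^M) = \zero$ into $g_k$ for every $k$. Since this solution is a root of every transformed initial form, its contribution to the constant-in-$t$ part of $g_k$ vanishes identically. Hence after substitution each equation reduces to a polynomial in $t$ with no constant term, so every surviving monomial carries a positive power $t^w$ with $w > 0$. By the hypothesis that the full system is not satisfied entirely, at least one of these residual polynomials is nonzero, so the smallest exponent $w_\ell$ among all surviving monomials across all equations is well-defined and strictly positive. This establishes the first half of the proposition.

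To justify that $w_\ell$ is the candidate for the second Puiseux exponent, I would substitute the ansatz from the normal form of a Puiseux series, $z_i = y_i + \tilde{z}_i t^w + O(t^{w+\delta})$ for $i = 1, \ldots, n-1$ with $z_0 = t$, into each $g_k(\z) = 0$. The $t^0$-part is already killed by the choice of the $y_i$, and the leading perturbation contributes a term of order $t^w$ coming from the linearization of $g_k$ in the $z_i$ at $(y_1, \ldots, y_{n-1})$, which must cancel the lowest remaining monomial of $g_k$ at that point. Any $w < w_\ell$ would leave an un-cancellable monomial of order strictly less than $t^{w_\ell}$, so the minimal admissible value is $w = w_\ell$.

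The main obstacle will be interpreting \emph{candidate} correctly. The argument above shows that $w_\ell$ is the smallest exponent for which a second-term coefficient can even be attempted; whether the resulting linear system for the coefficients $\tilde{z}_i$ is in fact consistent is not guaranteed by this statement and must be verified separately. The proposition asserts only the existence and minimality of $w_\ell$ as this candidate, so the core of the proof is the algebraic bookkeeping above, which translates the definition of an initial form into the statement that the transformed system, after cancellation of its leading constant-in-$t$ part by the initial form solution, has a well-defined minimal positive $t$-exponent $w_\ell$.
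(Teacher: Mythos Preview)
Your proof is correct and follows essentially the same approach as the paper: both factor each transformed $f_i$ as $z_0^{m_i}$ times a polynomial whose $z_0$-constant part is the transformed initial form (which vanishes at the isolated root), leaving only strictly positive powers of $t$ among which a minimal $w_\ell$ exists by the hypothesis that the full system is not satisfied. Your treatment is in fact slightly more explicit than the paper's---you spell out why the $z_0$-exponent of each monomial is $\langle \bfa, \bfv\rangle$ and you argue more carefully for the minimality of $w_\ell$ via the ansatz---but the structure of the argument is the same.
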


\begin{proof} 
Let $\z = (z_0,z_1,\ldots,z_{n-1})$ 
and $\bar{\z} = (z_1, z_2, \ldots, z_{n-1})$ 
denote variables after the unimodular transformation. 
Let $(z_{0} = t, z_{1} = r_{1}, \ldots, z_{n-1} = r_{n-1})$ 
be a regular solution at infinity and $t$ the free variable.

The $i$th equation of the original system after the unimodular 
coordinate transformation has the form
\begin{equation} \label{prop_sys}
   f_{i} = z_0^{m_{i}} ( P_{i}(\bar{\z}) + O(z_0) Q_{i}(\z) ),
   \quad i = 1,2, \ldots, N,
\end{equation}
where the polynomial $P_{i}(\bar{\z})$ consists of all monomials 
which form the initial form component of $f_{i}$ and $Q_{i}(\z)$ 
is a polynomial consisting of all remaining monomials of $f_{i}$. 
After the coordinate transformation, we denote the series expansion as
\begin{equation}
  \left\{
\begin{array}{l}
\displaystyle z_{0} = t \\
\displaystyle z_{j} = r_{j} + k_{j}t^{w_{\ell}}(1 + O(t)), 
\quad j = 1,2,\ldots,n-1.
\end{array} 
  \right.
\label{prop_sys_sol_form}
\end{equation}
for some $\ell$ and where at least one $k_j$ is nonzero.

We first show that, for all $i$, 
the polynomial $z_{0}^{m_{i}}P_{i}(\bar{\z})$ 
cannot contain a monomial of the form $c_{\ell}t^{w_{\ell}}$ on 
substitution of (\ref{prop_sys_sol_form}).
The polynomial $z_{0}^{m_{i}}P_{i}(\bar{\z})$ 
is the initial form of $f_{i}$, hence solution 
at infinity $(z_0=t, z_1=r_{1}, z_2=r_{2}, \ldots, z_{n-1}=r_{n-1})$ 
satisfies $z_{0}^{m_{i}}P_{i}(\bar{\z})$ entirely. 
Substituting (\ref{prop_sys_sol_form}) 
into $z_{0}^{m_{i}}P_{i}(\bar{\z})$ 
eliminates all constants in $t^{m_{i}}P_{i}(\bar{\z})$. 
Hence, the polynomial $P_{i}(t)$  = $R_{i}(t^{w})$ and, 
therefore, $t^{m_{i}}P_{i}(t)$  = $R_{i}(t^{w + m_{i}})$.

We next show that for some $i = \ell$, 
the polynomial $Q_{i}(\z)$ contains 
a monomial $c_{\ell}t^{w_{\ell}}$.
The polynomial $Q_{i}(\z)$ is rewritten:
\begin{equation}
   z_{0}^{w_{\ell}}Q_{i}(\bar{\z})
   = z_{0}^{w_{\ell}}T_{i0}(\bar{\z}) + z_{0}^{w_{\ell}+1}T_{i1}(\bar{\z})
   + \cdots.
\end{equation}
The polynomial $Q_{i}(\z)$ = $z_{0}^{w_{\ell}}Q_{i}(\bar{\z})$ 
consists of monomials which are not part of the initial form of~$f_{i}$. 
Hence, on substitution of solution at infinity (\ref{prop_sys_sol_form}), 
$z_{0}^{w_{\ell}} Q_{i}(\bar{\z})$ = $t^{w_{\ell}} Q_{i}(t)$  
does not vanish entirely and there must be at least one $i = \ell$
for which constants remain after substitution. 
Since $Q_{\ell}(t)$ contains monomials which are constants, 
$t^{w_{\ell}} Q_{\ell}(t)$ must contain a monomial of the 
form $c_{\ell} t^{w_{\ell}}$.
\end{proof}

Now we describe the computation of the second term,
in case the initial root does not satisfy the entire original system.
Assume the following general form of the series:
\begin{equation} \label{eqgenform}
   \left\{
      \begin{array}{l}
         z_0 = t \\
         z_i = c_i^{(0)} + k_i t^{w_{\ell}} (1 + O(t)),
         \quad i = 1,2,\ldots,n-1,
      \end{array}
   \right.
\end{equation}
for some~$\ell$ and
where $c_i^{(0)} \in \cc^*$ are the coordinates of the initial root,
$k_i$ is the unknown coefficient of the second term 
$t^{w_{\ell}}$, $w_{\ell} > 0$.
Note that only for some $k_i$ nonzero values may exist,
but not all $k_i$ may be zero.
We are looking for the smallest $w_{\ell}$ for which the linear system
in the $k_i$'s admits a solution with at least one nonzero coordinate.
Substituting~(\ref{eqgenform}) gives equations of the form
\begin{equation} \label{eqgenformsys}
   \widehat{c_i^{(0)}} t^{w_{\ell}} (1 + O(t))
   + t^{w_{\ell} + b_i} \sum_{j=1}^n \gamma_{ij} k_j (1 + O(t)) = 0,
   \quad i = 1,2, \ldots, n,
\end{equation}
for constant exponents $w_{\ell}$, $b_i$ and
constant coefficients $\widehat{c_i^{(0)}}$ and~$\gamma_{ij}$.

In the equations of~(\ref{eqgenformsys})
we truncate the $O(t)$ terms and retain those equations
with the smallest value of the exponents~$w_{\ell}$,
because with the second term of the series solution 
we want to eliminate the lowest powers of~$t$ when we
plug in the first two terms of the series into the system.
This gives a condition on the value~$w_{\ell}$ of the unknown exponent of~$t$
in the second term.  If there is no value for~$w_{\ell}$ so that we can
match with $w_{\ell} + b_i$ the minimal value of~$w_{\ell}$ for all equations
where the same minimal value of~$w_{\ell}$ occurs, then there does not
exist a second term and hence no space curve.
Otherwise, with the matching value for~$w_{\ell}$ we obtain a linear system
in the unknown $k$ variables.  If a solution to this linear system
exists with at least one nonzero coordinate, then we have found
a second term, otherwise, there is no space curve.

For an algebraic set of dimension~$d$, 
we have a polyhedral cone of $d$ tropisms and we take any general
vector $\bfv$ in this cone.  Then we apply the method outlined above
to compute the second term in the series in one parameter,
in the direction of~$\bfv$.

\subsection{Series Developments for Cyclic 8-roots}

We illustrate our approach on the cyclic 8-roots problem,
denoted by~$C_8(\x) = \zero$ and take as
pretropism $\bfv = (1, -1, 0, 1, 0, 0, -1, 0)$.
Replacing the first row of the 8-dimensional identity matrix by~$\bfv$ 
yields a unimodular coordinate transformation, denoted as $\x = \z^M$,
explicitly defined as
\begin{equation} \label{relxz}
x_{0} = z_{0},~
x_{1} = z_{1}/z_{0},~
 x_{2} = z_{2},~
 x_{3} = z_{0} z_{3},~
 x_{4} = z_{4},~
 x_{5} = z_{5},~
 x_{6} = z_{6}/z_{0},~
 x_{7} = z_{7}.
\end{equation}
Applying $\x = \z^M$ to the initial form system 
${\rm in}_\bfv(C_8)(\x) = \zero$ gives
\begin{equation} \label{infz1}
   {\rm in}_\bfv (C_{8})(\x = \z^M) = 
   \left\{
      \begin{array}{l}
          z_{1} + z_{6}=0\\ 
          z_{1} z_{2} + z_{5} z_{6} + z_{6} z_{7} = 0 \\ 
          z_{4} z_{5} z_{6} + z_{5} z_{6} z_{7} = 0 \\ 
          z_{4} z_{5} z_{6} z_{7} + z_{1} z_{6} z_{7} = 0 \\ 
          z_{1} z_{2} z_{6} z_{7} + z_{1} z_{5} z_{6} z_{7} = 0 \\
          z_{1} z_{2} z_{3} z_{4} z_{5} z_{6} 
 + z_{1} z_{2} z_{5} z_{6} z_{7} + z_{1} z_{4} z_{5} z_{6} z_{7} = 0 \\ 
          z_{1} z_{2} z_{3} z_{4} z_{5} z_{6} z_{7}
 + z_{1} z_{2} z_{4} z_{5} z_{6} z_{7} = 0 \\
          z_{1} z_{2} z_{3} z_{4} z_{5} z_{6} z_{7} - 1 = 0.
     \end{array}
   \right.
\end{equation}
By construction of~$M$, observe that all polynomials have the
same power of~$z_0$, so~$z_0$ can be factored out.
Removing~$z_0$ from the initial form system, we find a solution 
\begin{equation}
   z_{0} = t,~
   z_{1} = -I,~
   z_{2} = \frac{-1}{2} - \frac{I}{2},~
   z_{3} = -1,~ 
   z_{4} = 1+I,~ 
   z_{5} = \frac{1}{2} + \frac{I}{2},~ 
   z_{6} = I,~
   z_{7} = -1 - I
\end{equation}
where $I = \sqrt{-1}$.  This solution is a regular solution.
We set~$z_0 = t$, where $t$ is the variable for the Puiseux series.
In the computation of the second term, 
we assume the Puiseux series of the form at the left of~(\ref{cyc8form}).
We first transform the cyclic 8-roots system $C_8(\x) = \zero$ 
using the coordinate transformation given by (\ref{relxz}) 
and then substitute the assumed series form into this new system. 
Since the next term in the series is of the form $k_{j}t^{1}$, 
we collect all the coefficients of $t^{1}$ and solve the linear system 
of equations. 
The second term in the Puiseux series expansion for the cyclic 8-root system, 
has the form as at the right of~(\ref{cyc8form}).

\begin{equation} \label{cyc8form}
      \begin{cases}
         z_0 = t\\
         z_1 = -I +c_{1}t\\
         z_2 = \frac{-1}{2} - \frac{I}{2}+c_{2}t\\
         z_3 = -1+c_{3}t\\
         z_4 = 1+I+c_{4}t\\
         z_5 = \frac{1}{2} + \frac{I}{2}+c_{5}t\\
         z_6 = I+c_{6}t\\
         z_7 = (-1 - I)+c_{7}t\\
      \end{cases}
\quad \quad
      \begin{cases}
          z_0 = t\\
          z_1 = -I +(-1-I)t\\
          z_2 = \frac{-1}{2} - \frac{I}{2}+\frac{1}{2}t\\
          z_3 = -1\\
          z_4 = 1+I-t\\
          z_5 = \frac{1}{2} + \frac{I}{2}-\frac{1}{2}t\\
          z_6 = I+(1+I)t\\
          z_7 = (-1 - I)+t\\
      \end{cases}
\end{equation} 
Because of the regularity of the solution of the initial form system
and the second term of the Puiseux series, we have a symbolic-numeric
representation of a quadratic solution curve. 

If we place the same pretropism in another row in the unimodular matrix,
then we can develop the same curve starting at a different coordinate plane.
This move is useful if the solution curve would not be in general
position with respect to the first coordinate plane.
For symmetric polynomial systems, we apply the permutations to
the pretropism, the initial form systems, and its solutions to find
Puiseux series for different solution curves, related to the generating
pretropism by symmetry.  

Also for the pretropism $\bfv = (1, -1, 1, -1, 1, -1, 1, -1)$,
the coordinate transformation is given by the unimodular matrix 
$M$ equal to the identity matrix, except for its first row~$\bfv$.
The coordinate transformation $\x = \z^M$ 
yields $x_{0} = z_{0}$, $x_{1} = z_{1}/z_{0}$, $x_{2} = z_{0}z_{2}$, 
$x_{3} = z_{3}/z_{0}$, $x_{4} = z_{0}z_{4}$, $x_{5} = z_{5}/z_{0}$, 
$x_{6} = z_{0}z_{6}$, $x_{7} = z_{7}/z_{0}$. 
Applying the coordinate transformation 
to ${\rm in}_\bfv(C_8)(\x)$ gives
\begin{equation} \label{infz_onetermsol}
   {\rm in}_\bfv (C_{8})(\x = \z^M) = 
   \left\{
      \begin{array}{l}
          z_{1} + z_{3} + z_{5} + z_{7} = 0\\ 
          z_{1} z_{2} + z_{2} z_{3} + z_{3} z_{4} + z_{4} z_{5}
          + z_{5} z_{6} + z_{6} z_{7} + z_{1} + z_{7} = 0 \\ 
          z_{1} z_{2} z_{3} + z_{3} z_{4} z_{5} + z_{5} z_{6} z_{7} + z_{1} z_{7} = 0 \\ 
          z_{1} z_{2} z_{3} z_{4} + z_{2} z_{3} z_{4} z_{5} + z_{3} z_{4} z_{5} z_{6}
          + z_{4} z_{5} z_{6} z_{7} + z_{1} z_{2} z_{3} \\
          +~ z_{1} z_{2} z_{7} + z_{1} z_{6} z_{7} + z_{5} z_{6} z_{7} = 0 \\ 
          z_{1} z_{2} z_{3} z_{4} z_{5} + z_{3} z_{4} z_{5} z_{6} z_{7} 
          + z_{1} z_{2} z_{3} z_{7} + z_{1} z_{5} z_{6} z_{7} = 0 \\ 
          z_{1} z_{2} z_{3} z_{4} z_{5} z_{6} + z_{2} z_{3} z_{4} z_{5} z_{6} z_{7}
          + z_{1} z_{2} z_{3} z_{4} z_{5} + z_{1} z_{2} z_{3} z_{4} z_{7} \\
          +~ z_{1} z_{2} z_{3} z_{6} z_{7} + z_{1} z_{2} z_{5} z_{6} z_{7}
          + z_{1} z_{4} z_{5} z_{6} z_{7} + z_{3} z_{4} z_{5} z_{6} z_{7} = 0 \\ 
          z_{1} z_{2} z_{3} z_{4} z_{5} z_{6} z_{7}
          + z_{1} z_{2} z_{3} z_{4} z_{5} z_{7} 
          + z_{1} z_{2} z_{3} z_{5} z_{6} z_{7} 
          + z_{1} z_{3} z_{4} z_{5} z_{6} z_{7} = 0 \\ 
          z_{1} z_{2} z_{3} z_{4} z_{5} z_{6} z_{7} - 1 = 0
     \end{array}
   \right.
\end{equation}

The initial form system (\ref{infz_onetermsol}) has 72 solutions.
Among the 72 solutions, a solution of the form  
\begin{equation}
       z_{0} =t,~ 
       z_{1} =-1,~ 
       z_{2} =I,~ 
       z_{3} =-I,~ 
       z_{4} =-1,~ 
       z_{5} =1,~ 
       z_{6} =-I,~ 
       z_{7} =I, 
\end{equation}
here expressed in the original coordinates,
\begin{equation}
       x_{0} = t,~
       x_{1} = -1/t,~
       x_{2} = I t,~
       x_{3} = -I/t,~
       x_{4} = -t,~
       x_{5} = 1/t,~
       x_{6} = -I t,~
       x_{7} = I/t 
\end{equation}
satisfies the cyclic 8-roots entirely. Applying the cyclic permutation 
of this solution set we can obtain the remaining 7 solution sets, 
which also satisfy the cyclic 8-roots system.

In~\cite{Ver09b}, a formula for the degree of the curve
was derived, based on the coordinates of the tropism and
the number of initial roots for the same tropism.
We apply this formula 
and obtain 144 as the known degree of the space curve
of the one dimensional solution set, see Table~\ref{tabdegcyc8}.

\begin{table}[hbt]
\begin{center}
\begin{tabular}{cr}
   $(1, -1, 1, -1, 1, -1, 1, -1)$ & $8 \times 2 = 16$ \\
   $(1, -1, 0, 1, 0, 0, -1, 0) \to (1, 0, 0, -1, 0,1,-1,0)$
 & $8 \times 2 + 8 \times 2 = 32$ \\
   $(1, 0, -1, 0, 0, 1, 0, -1) \to (1, 0, -1,1,0,-1,0,0)$
 & $8 \times 2 + 8 \times 2 = 32$ \\
   $(1, 0, -1, 1, 0, -1, 0, 0) \to (1, 0, -1, 0, 0,1,0,-1)$
 & $8 \times 2 + 8 \times 2 = 32$ \\
   $(1, 0, 0, -1, 0, 1, -1, 0) \to (1, -1, 0,1,0,0,-1,0)$
 & $8 \times 2 + 8 \times 2 = 32$ \\
 & \multicolumn{1}{r}{TOTAL  = 144}
\end{tabular}
\caption{Tropisms, cyclic permutations, and degrees for the cyclic
         8 solution curve.}
\label{tabdegcyc8}
\end{center}
\end{table}

Using the same polyhedral method we can find all the isolated solutions 
of the cyclic 8-roots system.  We conclude this subsection with some
empirical observations on the time complexity.
In the direction $(1, -1, 0, 1, 0, 0, -1, 0)$, 
there is a second term in the Puiseux series as for the
40 solutions of the initial form system, there is no first term
that satisfies the entire cyclic 8-roots system. 
Continuing to construct the second term,
the total time required is 35.5 seconds, which 
includes 28 milliseconds that PHCpack needed to solve the initial form system. 
For $(1, -1, 1, -1, 1, -1, 1, -1)$ there is no second term 
in the Puiseux series as the first term satisfies the entire system.
Hence, the procedure for construction and computation
of the second term does not run. It takes PHCpack 12 seconds to solve the 
initial form system, whose solution set consists of 509 solutions. 
Determining that there is no second term for the 509 solutions, 
takes 199 seconds.
Given their numbers of solutions, the ratio for time comparison is given 
by $\frac{509}{40} = 12.725 $.  However, given that for tropisms 
$(1, -1, 0, 1, 0, 0, -1, 0)$ the procedure for construction and computation 
of the second term does run, 
unlike for tropism $(1, -1, 1, -1, 1, -1, 1, -1)$, 
the ratio for time comparison is not precise enough.
A more accurate ratio for comparison is $\frac{199}{35} \approx 5.686$.

\subsection{Cyclic 12-roots}

The generating solutions to the quadratic space curve solutions
of the cyclic 12-roots problem are in Table~\ref{c12_table}. 
As the result in the Table~\ref{c12_table} is given in the transformed 
coordinates, we return the solutions to the original coordinates. 
For any solution generator 
$(r_{1}, r_{2}, \ldots, r_{11})$ in Table~\ref{c12_table}: 

\begin{equation}
\begin{aligned}
      &z_{0} =t,~ z_{1} = r_{1},~ z_{2} = r_{2},~ z_{3} = r_{3},~ z_{4} = r_{4},~ z_{5} = r_{5}, \\
       &z_{6} =r_{6},~ z_{7} = r_{7},~ z_{8} = r_{8},~ z_{9} = r_{9},~ z_{10} = r_{10},~ z_{11} = r_{11}\\  
\end{aligned}
\end{equation}
and turning to the original coordinates we obtain
\begin{equation}
   \begin{aligned}
       &x_{0} =t,~ x_{1} =r_{1}/t,~ x_{2} = r_{2}t,~ x_{3} =r_{3}/t,~ x_{4} =r_{4}t,~ x_{5} =r_{5}/t\\ 
       &x_{6} =r_{6}t,~ x_{7} =r_{7}/t,~ x_{8} =r_{8}t,~ x_{9} =r_{9}/t,~ x_{10} =r_{10}t,~ x_{11} =r_{11}/t\\
   \end{aligned} 
\end{equation}
Application of the degree formula of~\cite{Ver09b} shows that
all space curves are quadrics.  
Compared to~\cite{Sab11}, we arrive at this result without
the application of any factorization methods.

\begin{sidewaystable}[hbt]
\resizebox{21cm}{!} {
\begin{tabular}{|c|c|c|c|c|c|c|c|c|c|c|c|}
  \hline
$z_1$ & $z_2$ & $z_3$ & $z_4$ & $z_5$ 
& $z_6$ & $z_7$ & $z_8$ & $z_9$ & $z_{10}$ & $z_{11}$ \\ \hline \hline
$\frac{1}{2}+ \frac{\sqrt{3}}{2}I$ & $\frac{1}{2}-\frac{\sqrt{3}}{2}I$ & $1$ & $-\frac{1}{2} -\frac{\sqrt{3}}{2}I$ & $\frac{1}{2} -\frac{\sqrt{3}}{2}I$ & $-1$ & $-\frac{1}{2} -\frac{\sqrt{3}}{2}I$ & $-\frac{1}{2}+ \frac{\sqrt{3}}{2}I$ & $-1$ & $\frac{1}{2}+ \frac{\sqrt{3}}{2}I$ & $-\frac{1}{2}+ \frac{\sqrt{3}}{2}I$ \\

$\frac{1}{2}+ \frac{\sqrt{3}}{2}I$ & $-1$ & $1$ & $1$ & $\frac{1}{2} -\frac{\sqrt{3}}{2}I$ & $-1$ & $-\frac{1}{2} -\frac{\sqrt{3}}{2}I$ & $1$ & $-1$ & $-1$ & $-\frac{1}{2}+ \frac{\sqrt{3}}{2}I$ \\

$\frac{1}{2}+ \frac{\sqrt{3}}{2}I$ & $\frac{1}{2}+ \frac{\sqrt{3}}{2}I$ & $-\frac{1}{2}-\frac{\sqrt{3}}{2}I$ & $1$ & $\frac{1}{2} -\frac{\sqrt{3}}{2}I$ & $-1$ & $-\frac{1}{2} -\frac{\sqrt{3}}{2}I$ & $-\frac{1}{2}+ \frac{\sqrt{3}}{2}I$ & $\frac{1}{2}+ \frac{\sqrt{3}}{2}I$ & $-1$ & $-\frac{1}{2}$ \\

$\frac{1}{2} -\frac{\sqrt{3}}{2}I$ & $-1$ & $-\frac{1}{2} -\frac{\sqrt{3}}{2}I$ & $1$ & $-1$ & $-1$ & $-\frac{1}{2}+ \frac{\sqrt{3}}{2}I$ & $1$ & $\frac{1}{2}+ \frac{\sqrt{3}}{2}I$ & $-1$ & $1$ \\

$-\frac{1}{2} -\frac{\sqrt{3}}{2}I$ & $\frac{1}{2} -\frac{\sqrt{3}}{2}I$ & $-1$ & $-\frac{1}{2}+ \frac{\sqrt{3}}{2}I$ & $-\frac{1}{2} -\frac{\sqrt{3}}{2}I$ & $-1$ & $\frac{1}{2}$ & $-\frac{1}{2}+ \frac{\sqrt{3}}{2}I$ & $1$ & $\frac{1}{2} -\frac{\sqrt{3}}{2}I$ & $\frac{1}{2}+ \frac{\sqrt{3}}{2}I$ \\

$1$ & $\frac{1}{2} -\frac{\sqrt{3}}{2}I$ & $-1$ & $1$ & $-\frac{1}{2} -\frac{\sqrt{3}}{2}I$ & $-1$ & $-1$ & $-\frac{1}{2}+ \frac{\sqrt{3}}{2}I$ & $1$ & $-1$ & $\frac{1}{2}+ \frac{\sqrt{3}}{2}I$ \\

$-\frac{1}{2}+ \frac{\sqrt{3}}{2}I$ & $\frac{1}{2} -\frac{\sqrt{3}}{2}I$ & $\frac{1}{2}+ \frac{\sqrt{3}}{2}I$ & $-\frac{1}{2} -\frac{\sqrt{3}}{2}I$ & $-\frac{1}{2} -\frac{\sqrt{3}}{2}I$ & $-1$ & $\frac{1}{2}+ \frac{\sqrt{3}}{2}I$ & $-\frac{1}{2}+ \frac{\sqrt{3}}{2}I$ & $-\frac{1}{2}+ \frac{\sqrt{3}}{2}I$ & $\frac{1}{2}+ \frac{\sqrt{3}}{2}I$ & $\frac{1}{2}+ \frac{\sqrt{3}}{2}I$ \\

$-\frac{1}{2} -\frac{\sqrt{3}}{2}I$ & $-1$ & $\frac{1}{2} -\frac{\sqrt{3}}{2}I$ & $1$ & $1$ & $-1$ & $\frac{1}{2}+ \frac{\sqrt{3}}{2}I$ & $1$ & $-\frac{1}{2}+ \frac{\sqrt{3}}{2}I$ & $-1$ & $-1$ \\

$1$ & $-1$ & $\frac{1}{2}+ \frac{\sqrt{3}}{2}I$ & $-\frac{1}{2} -\frac{\sqrt{3}}{2}I$ & $-\frac{1}{2} -\frac{\sqrt{3}}{2}I$ & $-1$ & $-1$ & $1$ & $-\frac{1}{2} -\frac{\sqrt{3}}{2}I$ & $\frac{1}{2}+ \frac{\sqrt{3}}{2}I$ & $\frac{1}{2}+ \frac{\sqrt{3}}{2}I$ \\

$-\frac{1}{2}+ \frac{\sqrt{3}}{2}I$ & $\frac{1}{2} -\frac{\sqrt{3}}{2}I$ & $\frac{1}{2} -\frac{\sqrt{3}}{2}I$ & $-\frac{1}{2} -\frac{\sqrt{3}}{2}I$ & $-\frac{1}{2}+ \frac{\sqrt{3}}{2}I$ & $-1$ & $\frac{1}{2}$ & $-\frac{1}{2}+ \frac{\sqrt{3}}{2}I$ & $-\frac{1}{2}+\frac{\sqrt{3}}{2}I$ & $\frac{1}{2}+ \frac{\sqrt{3}}{2}I$ & $\frac{1}{2} -\frac{\sqrt{3}}{2}I$ \\

$-\frac{1}{2}+ \frac{\sqrt{3}}{2}I$ & $\frac{1}{2} -\frac{\sqrt{3}}{2}I$ & $\frac{1}{2}+\frac{\sqrt{3}}{2}I$ & $-\frac{1}{2} -\frac{\sqrt{3}}{2}I$ & $1$ & $-1$ & $\frac{1}{2}$ & $-\frac{1}{2}+ \frac{\sqrt{3}}{2}I$ & $-\frac{1}{2} -\frac{\sqrt{3}}{2}I$ & $\frac{1}{2}+ \frac{\sqrt{3}}{2}I$ & $-1$ \\

$-\frac{1}{2}$ & $\frac{1}{2} -\frac{\sqrt{3}}{2}I$ & $\frac{1}{2} -\frac{\sqrt{3}}{2}I$ & $1$ & $1$ & $-1$ & $\frac{1}{2} -\frac{\sqrt{3}}{2}I$ & $-\frac{1}{2}+ \frac{\sqrt{3}}{2}I$ & $-\frac{1}{2}+ \frac{\sqrt{3}}{2}I$ & $-1$ & $-1$ \\

$\frac{1}{2} -\frac{\sqrt{3}}{2}I$ & $\frac{1}{2}-\frac{\sqrt{3}}{2}I$ & $-\frac{1}{2} -\frac{\sqrt{3}}{2}I$ & $-\frac{1}{2}+\frac{\sqrt{3}}{2}I$ & $\frac{1}{2} -\frac{\sqrt{3}}{2}I$ & $-1$ & $-\frac{1}{2}+ \frac{\sqrt{3}}{2}I$ & $-\frac{1}{2}+ \frac{\sqrt{3}}{2}I$ & $\frac{1}{2}+ \frac{\sqrt{3}}{2}I$ & $\frac{1}{2}+ \frac{\sqrt{3}}{2}I$ & $-\frac{1}{2}+ \frac{\sqrt{3}}{2}I$ \\

$\frac{1}{2}+ \frac{\sqrt{3}}{2}I$ & $\frac{1}{2} -\frac{\sqrt{3}}{2}I$ & $-\frac{1}{2} -\frac{\sqrt{3}}{2}I$ & $-\frac{1}{2} -\frac{\sqrt{3}}{2}I$ & $\frac{1}{2}+ \frac{\sqrt{3}}{2}I$ & $-1$ & $-\frac{1}{2} -\frac{\sqrt{3}}{2}I$ & $-\frac{1}{2} -\frac{\sqrt{3}}{2}I$ & $\frac{1}{2}+ \frac{\sqrt{3}}{2}I$ & $\frac{1}{2}+ \frac{\sqrt{3}}{2}I$ & $-\frac{1}{2} -\frac{\sqrt{3}}{2}I$\\

$-\frac{1}{2}+ \frac{\sqrt{3}}{2}I$ & $-1$ & $-1$ & $-\frac{1}{2} -\frac{\sqrt{3}}{2}I$ & $1$ & $-1$ & $\frac{1}{2} -\frac{\sqrt{3}}{2}I$ & $1$ & $1$ & $\frac{1}{2}+ \frac{\sqrt{3}}{2}I$ & $-1$ \\

$\frac{1}{2}+ \frac{\sqrt{3}}{2}I$ & $-1$ & $-\frac{1}{2}+ \frac{\sqrt{3}}{2}I$ & $-\frac{1}{2} -\frac{\sqrt{3}}{2}I$ & $\frac{1}{2} -\frac{\sqrt{3}}{2}I$ & $-1$ & $-\frac{1}{2} -\frac{\sqrt{3}}{2}I$ & $1$ & $\frac{1}{2} -\frac{\sqrt{3}}{2}I$ & $\frac{1}{2} -\frac{\sqrt{3}}{2}I$ & $-\frac{1}{2}+ \frac{\sqrt{3}}{2}I$ \\

$1$ & $\frac{1}{2} -\frac{\sqrt{3}}{2}I$ & $\frac{1}{2} -\frac{\sqrt{3}}{2}I$ & $-\frac{1}{2} -\frac{\sqrt{3}}{2}I$ & $-\frac{1}{2} -\frac{\sqrt{3}}{2}I$ & $-1$ & $-1$ & $-\frac{1}{2}+ \frac{\sqrt{3}}{2}I$ & $-\frac{1}{2}+ \frac{\sqrt{3}}{2}I$ & $\frac{1}{2}+ \frac{\sqrt{3}}{2}I$ & $\frac{1}{2}+ \frac{\sqrt{3}}{2}I$\\

$-1$ & $-1$ & $-\frac{1}{2} -\frac{\sqrt{3}}{2}I$ & $-\frac{1}{2} -\frac{\sqrt{3}}{2}I$ & $\frac{1}{2}+ \frac{\sqrt{3}}{2}I$ & $-1$ & $1$ & $1$ & $\frac{1}{2}$ & $\frac{1}{2}$ & $-\frac{1}{2} -\frac{\sqrt{3}}{2}I$ \\

$-1$ & $\frac{1}{2}$ & $1$ & $1$ & $\frac{1}{2}$ & $-1$ & $1$ & $-\frac{1}{2}+ \frac{\sqrt{3}}{2}I$ & $-1$ & $-1$ & $-\frac{1}{2} -\frac{\sqrt{3}}{2}I$ \\

$-\frac{1}{2} -\frac{\sqrt{3}}{2}I$ & $\frac{1}{2} -\frac{\sqrt{3}}{2}I$ & $\frac{1}{2} -\frac{\sqrt{3}}{2}I$ & $1$ & $-\frac{1}{2}$ & $-1$ & $\frac{1}{2}+ \frac{\sqrt{3}}{2}I$ & $-\frac{1}{2}+ \frac{\sqrt{3}}{2}I$ & $-\frac{1}{2} -\frac{\sqrt{3}}{2}I$ & $-1$ & $\frac{1}{2} -\frac{\sqrt{3}}{2}I$ \\

  \hline
\end{tabular}
}	
\caption{Generators of the roots of the 
initial form system ${\rm in}_{\bfv}(C_{12})(\x) = \zero$
with the tropism $\bfv = (+1,-1,+1,-1, +1,-1,+1,-1, +1,-1,+1,-1)$
in the transformed $\z$ coordinates.
Every solution defines a solution curve of 
the cyclic 12-roots system.}
\label{c12_table}
\end{sidewaystable}

\section{Concluding Remarks}

Inspired by an effective proof of the fundamental theorem
of tropical algebraic geometry, we outlined in this paper
a polyhedral method to compute Puiseux series expansions
for solution curves of polynomial systems.
The main advantage of the new approach is the capability
to exploit permutation symmetry.
For our experiments, we relied on
cddlib and Gfan for the pretropisms, the blackbox solver of PHCpack
for solving the initial form systems, and Sage for the manipulations
of the Puiseux series.  

\bibliographystyle{plain}

\begin{thebibliography}{10}

\bibitem{Adr12}
D.~Adrovic.
\newblock {\em Solving Polynomial Systems with Tropical Methods}.
\newblock PhD thesis, University of Illinois at Chicago, Chicago, 2012.

\bibitem{AV11b}
D.~Adrovic and J.~Verschelde.
\newblock Polyhedral methods for space curves exploiting symmetry.
\newblock {\tt arXiv:1109.0241v1}.

\bibitem{AV11}
D.~Adrovic and J.~Verschelde.
\newblock Tropical algebraic geometry in {M}aple: A preprocessing algorithm for
  finding common factors to multivariate polynomials with approximate
  coefficients.
\newblock {\em Journal of Symbolic Computation}, 46(7):755--772, 2011.
\newblock Special Issue in Honour of Keith Geddes on his 60th Birthday, edited
  by M.W. Giesbrecht and S.M. Watt.

\bibitem{AV12}
D.~Adrovic and J.~Verschelde.
\newblock Computing {P}uiseux series for algebraic surfaces.
\newblock In J.~van~der Hoeven and M.~van Hoeij, editors, {\em Proceedings of
  the 37th International Symposium on Symbolic and Algebraic Computation (ISSAC
  2012)}, pages 20--27. ACM, 2012.

\bibitem{AIL11}
F.~Aroca, G.~Ilardi, and L.~L{\'{o}}pez~de Medrano.
\newblock Puiseux power series solutions for systems of equations.
\newblock {\em International Journal of Mathematics}, 21(11):1439--1459, 2011.

\bibitem{Bac89}
J.~Backelin.
\newblock Square multiples n give infinitely many cyclic n-roots.
\newblock Reports, Matematiska Institutionen~8, Stockholms universitet, 1989.

\bibitem{BR03}
F.~Beringer and F.~Richard-Jung.
\newblock Multi-variate polynomials and {N}ewton-{P}uiseux expansions.
\newblock In F.~Winkler and U.~Langer, editors, {\em Symbolic and Numerical
  Scientific Computation Second International Conference, SNSC 2001, Hagenberg,
  Austria, September 12-14, 2001}, volume 2630 of {\em Lecture Notes in
  Computer Science}, pages 240--254, 2003.

\bibitem{Ber75}
D.N. Bernshte{\v{\i}}n.
\newblock The number of roots of a system of equations.
\newblock {\em Functional Anal. Appl.}, 9(3):183--185, 1975.
\newblock Translated from {\em Funktsional. Anal. i Prilozhen.},
  9(3):1--4,1975.

\bibitem{BF94}
G.~Bj\"{o}rck and R.~Fr\"{o}berg.
\newblock Methods to ``divide out'' certain solutions from systems of algebraic
  equations, applied to find all cyclic 8-roots.
\newblock In M.~Gyllenberg and L.E. Persson, editors, {\em Analysis, Algebra
  and Computers in Math. research}, volume 564 of {\em Lecture Notes in
  Mathematics}, pages 57--70. Dekker, 1994.

\bibitem{BH07}
G.~Bj{\"{o}}rck and U.~Haagerup.
\newblock All cyclic $p$-roots of index 3, found by symmetry-preserving
  calculations.
\newblock Preprint available at {\tt http://www.math.ku.dk/}$\sim${\tt
  haagerup}.

\bibitem{BJSST07}
T.~Bogart, A.N. Jensen, D.~Speyer, B.~Sturmfels, and R.R. Thomas.
\newblock Computing tropical varieties.
\newblock {\em Journal of Symbolic Computation}, 42(1):54--73, 2007.

\bibitem{Bru00}
A.D. Bruno.
\newblock {\em Power Geometry in Algebraic and Differential Equations},
  volume~57 of {\em North-Holland Mathematical Library}.
\newblock Elsevier, 2000.

\bibitem{Col97}
A.~Colin.
\newblock Solving a system of algebraic equations with symmetries.
\newblock {\em Journal of Pure and Applied Algebra}, 177--118:195--215, 1997.

\bibitem{DKK03}
Y.~Dai, S.~Kim, and M.~Kojima.
\newblock Computing all nonsingular solutions of cyclic-n polynomial using
  polyhedral homotopy continuation methods.
\newblock {\em J. Comput. Appl. Math.}, 152(1-2):83--97, 2003.

\bibitem{DRS10}
J.A. De~Loera, J.~Rambau, and F.~Santos.
\newblock {\em Triangulations. Structures for Algorithms and Applications},
  volume~25 of {\em Algorithms and Computation in Mathematics}.
\newblock Springer-Verlag, 2010.

\bibitem{Emi94}
I.Z. Emiris.
\newblock {\em Sparse Elimination and Applications in Kinematics}.
\newblock PhD thesis, University of California, Berkeley, 1994.

\bibitem{EC95}
I.Z. Emiris and J.F. Canny.
\newblock Efficient incremental algorithms for the sparse resultant and the
  mixed volume.
\newblock {\em Journal of Symbolic Computation}, 20(2):117--149, 1995.

\bibitem{Fau99}
J.C. Faug\`ere.
\newblock A new efficient algorithm for computing {G}r\"obner bases ($f_4$).
\newblock {\em Journal of Pure and Applied Algebra}, 139(1-3):61--88, 1999.
\newblock Proceedings of MEGA'98, 22--27 June 1998, Saint-Malo, France.

\bibitem{Fau01}
J.C. Faug\`ere.
\newblock Finding all the solutions of {C}yclic 9 using {G}r{\"{o}}bner basis
  techniques.
\newblock In {\em Computer Mathematics - Proceedings of the Fifth Asian
  Symposium (ASCM 2001)}, volume~9 of {\em Lecture Notes Series on Computing},
  pages 1--12. World Scientific, 2001.

\bibitem{FR09}
J.C. Faug{\`{e}}re and S.~Rahmany.
\newblock Solving systems of polynomial equations with symmetries using
  {SAGBI}-{G}r{\"{o}}bner bases.
\newblock In J.~Johnson and H.~Park, editors, {\em Proceedings of the 2009
  International Symposium on Symbolic and Algebraic Computation (ISSAC 2009)},
  pages 151--158. ACM, 2009.

\bibitem{FP96}
K.~Fukuda and A.~Prodon.
\newblock Double description method revisited.
\newblock In M.~Deza, R.~Euler, and Y.~Manoussakis, editors, {\em Selected
  papers from the 8th Franco-Japanese and 4th Franco-Chinese Conference on
  Combinatorics and Computer Science}, volume 1120 of {\em Lecture Notes in
  Computer Science}, pages 91--111. Springer-Verlag, 1996.

\bibitem{GLW05}
T.~Gao, T.Y. Li, and M.~Wu.
\newblock Algorithm 846: {MixedVol}: a software package for mixed-volume
  computation.
\newblock {\em ACM Trans. Math. Softw.}, 31(4):555--560, 2005.

\bibitem{Gat00}
K.~Gatermann.
\newblock {\em Computer Algebra Methods for Equivariant Dynamical Systems},
  volume 1728 of {\em Lecture Notes in Mathematics}.
\newblock Springer-Verlag, 2000.

\bibitem{GKZ94}
I.M. Gel'fand, M.M. Kapranov, and A.V. Zelevinsky.
\newblock {\em Discriminants, Resultants and Multi-dimensional Determinants}.
\newblock Birkh{\"{a}}user, 1994.

\bibitem{GW12}
D.~Grigoriev and A.~Weber.
\newblock Complexity of solving systems with few independent monomials and
  applications to mass-action kinetics.
\newblock In V.P. Gerdt, W.~Koepf, E.W. Mayr, and E.V. Vorozhtsov, editors,
  {\em Proceedings of Computer Algebra in Scientific Computing - 14th
  International Workshop (CASC 2012)}, volume 7442 of {\em Lecture Notes in
  Computer Science}, pages 143--154. Springer, 2012.

\bibitem{Haa07}
U.~Haagerup.
\newblock Cyclic $p$-roots of prime length $p$ and related complex {H}adamard
  matrices.
\newblock Preprint available at {\tt http://www.math.ku.dk/}$\sim${\tt
  haagerup}.

\bibitem{HM06}
M.~Hampton and R.~Moeckel.
\newblock Finiteness of relative equilibria of the four-body problem.
\newblock {\em Invent.\ math.}, 163:289--312, 2006.

\bibitem{HJS13}
M.I. Herrero, G.~Jeronimo, and J.~Sabia.
\newblock Affine solution sets of sparse polynomial systems.
\newblock {\em Journal of Symbolic Computation}, 51(1):34--54, 2013.
\newblock Collected papers of MEGA 2011, Effective Methods in Algebraic
  Geometry, Stockholm, Sweden, 30 May-3 June 2011, edited by A. Dickenstein, S.
  Di Rocco, E. Hubert and J. Schicho.

\bibitem{HL12}
E.~Hubert and G.~Labahn.
\newblock Rational invariants of scalings from {H}ermite normal forms.
\newblock In J.~van~der Hoeven and M.~van Hoeij, editors, {\em Proceedings of
  the 37th International Symposium on Symbolic and Algebraic Computation (ISSAC
  2012)}, pages 219--226. ACM, 2012.

\bibitem{JH11}
A.~Jensen and M.~Hampton.
\newblock Finiteness of spatial central configurations in the five-body
  problem.
\newblock {\em Celestial Mechanics and Dynamical Astronomy}, 109:321--332,
  2011.

\bibitem{Jen08}
A.N. Jensen.
\newblock Computing {G}r{\"{o}}bner fans and tropical varieties in {G}fan.
\newblock In M.E. Stillman, N.~Takayama, and J.~Verschelde, editors, {\em
  Software for Algebraic Geometry}, volume 148 of {\em The IMA Volumes in
  Mathematics and its Applications}, pages 33--46. Springer-Verlag, 2008.

\bibitem{JMM08}
A.N. Jensen, H.~Markwig, and T.~Markwig.
\newblock An algorithm for lifting points in a tropical variety.
\newblock {\em Collectanea Mathematica}, 59(2):129--165, 2008.

\bibitem{JMSW09}
G.~Jeronimo, G.~Matera, P.~Solern{\'{o}}, and A.~Waissbein.
\newblock Deformation techniques for sparse systems.
\newblock {\em Foundations of Computational Mathematics}, 9(1):1--50, 2009.

\bibitem{Kaz99}
B.~Ya. Kazarnovskii.
\newblock Truncation of systems of polynomial equations, ideals and varieties.
\newblock {\em Izvestiya: Mathematics}, 63(3):535--547, 1999.

\bibitem{LLT08}
T.L. Lee, T.Y. Li, and C.H. Tsai.
\newblock {HOM4PS-2.0}: a software package for solving polynomial systems by
  the polyhedral homotopy continuation method.
\newblock {\em Computing}, 83(2-3):109--133, 2008.

\bibitem{LVZ06}
A.~Leykin, J.~Verschelde, and A.~Zhao.
\newblock Newton's method with deflation for isolated singularities of
  polynomial systems.
\newblock {\em Theoret.\ Comput.\ Sci.}, 359(1-3):111--122, 2006.

\bibitem{Mau80}
J.~Maurer.
\newblock Puiseux expansion for space curves.
\newblock {\em Manuscripta Math.}, 32:91--100, 1980.

\bibitem{McD02}
J.~McDonald.
\newblock Fractional power series solutions for systems of equations.
\newblock {\em Discrete Comput. Geom.}, 27(4):501--529, 2002.

\bibitem{Pay09}
S.~Payne.
\newblock Fibers of tropicalization.
\newblock {\em Mathematische Zeitschrift}, 262(2):301--311, 2009.

\bibitem{Pot08}
A.~Poteaux.
\newblock {\em Calcul de d{\'{e}}veloppements de {P}uiseux et application au
  calcul du groupe de monodromie d'une courbe alg{\'{e}}brique plane}.
\newblock PhD thesis, University of Limoges, Limoges, 2008.

\bibitem{PR08}
A.~Poteaux and M.~Rybowicz.
\newblock Good reduction of {P}uiseux series and complexity of the
  {N}ewton-{P}uiseux algorithm over finite fields.
\newblock In D.~Jeffrey, editor, {\em Proceedings of the 2008 International
  Symposium on Symbolic and Algebraic Computation (ISSAC 2008)}, pages
  239--246. ACM, 2008.

\bibitem{PR12}
A.~Poteaux and M.~Rybowicz.
\newblock Good reduction of {P}uiseux series and applications.
\newblock {\em Journal of Symbolic Computation}, 47(1):32--63, 2012.

\bibitem{Pui50}
V.~Puiseux.
\newblock Recherches sur les fonctions alg{\'{e}}briques.
\newblock {\em J. de Math. Pures et Appl.}, 15:365--380, 1850.

\bibitem{RST05}
J.~Richter-Gebert, B.~Sturmfels, and T.~Theobald.
\newblock First steps in tropical geometry.
\newblock In G.L. Litvinov and V.P. Maslov, editors, {\em Idempotent
  Mathematics and Mathematical Physics}, volume 377 of {\em Contemporary
  Mathematics}, pages 289--317. AMS, 2005.

\bibitem{Ron13}
G.~Rond.
\newblock About the algebraic closure of the field of power series in several
  variables in characteristic zero.
\newblock {\tt arXiv:1303.1921v2}.

\bibitem{Sab11}
R.~Sabeti.
\newblock Numerical-symbolic exact irreducible decomposition of cyclic-12.
\newblock {\em LMS Journal of Computation and Mathematics}, 14:155--172, 2011.

\bibitem{Sev13}
T.~Servi.
\newblock Multivariable {N}ewton-{P}uiseux theorem for convergent generalised
  power series.
\newblock {\tt arXiv:1304.0108v3}.

\bibitem{SVW04}
A.J. Sommese, J.~Verschelde, and C.W. Wampler.
\newblock Homotopies for intersecting solution components of polynomial
  systems.
\newblock {\em SIAM J. Numer. Anal.}, 42(4):552--1571, 2004.

\bibitem{SVW05}
A.J. Sommese, J.~Verschelde, and C.W. Wampler.
\newblock Introduction to numerical algebraic geometry.
\newblock In A.~Dickenstein and I.Z. Emiris, editors, {\em Solving Polynomial
  Equations. Foundations, Algorithms and Applications}, volume~14 of {\em
  Algorithms and Computation in Mathematics}, pages 301--337. Springer-Verlag,
  2005.

\bibitem{Ste13}
S.~Steidel.
\newblock Gr{\"{o}}bner bases of symmetric ideals.
\newblock {\em Journal of Symbolic Computation}, 54(1):72--86, 2013.

\bibitem{Sage}
W.\thinspace{}A. Stein et~al.
\newblock {\em {S}age {M}athematics {S}oftware ({V}ersion 4.5.2)}.
\newblock The Sage Development Team, 2010.
\newblock {\tt http://www.sagemath.org}.

\bibitem{Stu93}
B.~Sturmfels.
\newblock On the {N}ewton polytope of the resultant.
\newblock {\em Journal of Algebraic Combinatorics}, 3:207--236, 1994.

\bibitem{Stu96}
B.~Sturmfels.
\newblock {\em Gr{\"{o}}bner {B}ases and {C}onvex {P}olytopes}, volume~8 of
  {\em University Lecture Series}.
\newblock AMS, 1996.

\bibitem{Szo11}
F.~Sz{\"{o}}ll{\H{o}}si.
\newblock {\em Construction, classification and parametrization of complex
  {H}adamard matrices}.
\newblock PhD thesis, Central European University, Budapest, 2011.
\newblock {\tt arXiv:1110.5590v1}.

\bibitem{Ver99}
J.~Verschelde.
\newblock Algorithm 795: {PHC}pack: A general-purpose solver for polynomial
  systems by homotopy continuation.
\newblock {\em ACM Trans. Math. Softw.}, 25(2):251--276, 1999.
\newblock Software available at {\tt
  http://www.math.uic.edu/{\~{}}jan/download.html}.

\bibitem{Ver09b}
J.~Verschelde.
\newblock Polyhedral methods in numerical algebraic geometry.
\newblock In D.J. Bates, G.~Besana, S.~Di~Rocco, and C.W. Wampler, editors,
  {\em Interactions of Classical and Numerical Algebraic Geometry}, volume 496
  of {\em Contemporary Mathematics}, pages 243--263. AMS, 2009.

\bibitem{VG95}
J.~Verschelde and K.~Gatermann.
\newblock Symmetric {N}ewton polytopes for solving sparse polynomial systems.
\newblock {\em Adv. Appl. Math.}, 16(1):95--127, 1995.

\bibitem{Wal50}
R.J. Walker.
\newblock {\em Algebraic Curves}.
\newblock Princeton University Press, 1950.

\bibitem{Zie95}
G.M. Ziegler.
\newblock {\em Lectures on {P}olytopes}, volume 152 of {\em Graduate Texts in
  Mathematics}.
\newblock Springer-Verlag, 1995.

\end{thebibliography}

\end{document}